\documentclass[12pt, reqno]{article}

\usepackage{fullpage}
\usepackage{enumerate}
\usepackage{setspace}
\usepackage{graphicx}
\usepackage{subcaption}
\usepackage{bbm}

\usepackage{upgreek, mathtools,bm}

\usepackage{amsmath,amsfonts,amssymb,graphics,amsthm, comment}
\usepackage{hyperref}
\hypersetup{
    colorlinks=true,
    linkcolor=blue,
    citecolor=red,
    urlcolor=blue,
    pdfborder={0 0 0}
}
\usepackage{yfonts}
\usepackage[normalem]{ulem}
\usepackage{graphicx}
\usepackage{xcolor}
\usepackage{bbm}
\usepackage{wrapfig} 

\usepackage[font=sf, labelfont={sf,bf}, margin=1cm]{caption}

\usepackage{cleveref}
  \crefname{theorem}{Theorem}{Theorems}
  \crefname{thm}{Theorem}{Theorems}
  \crefname{thm*}{Theorem*}{Theorems}
  \crefname{lemma}{Lemma}{Lemmas}
  \crefname{lem}{Lemma}{Lemmas}
  \crefname{remark}{Remark}{Remarks}
  \crefname{prop}{Proposition}{Propositions}
\crefname{notation}{Notation}{Notations}
\crefname{claim}{Claim}{Claims}
  \crefname{defn}{Definition}{Definitions}
  \crefname{corollary}{Corollary}{Corollaries}
  \crefname{section}{Section}{Sections}
  \crefname{figure}{Figure}{Figures}
    \crefname{assumption}{Assumption}{Assumptions}

\newtheorem{thm}{Theorem}[section]
\newtheorem{thm*}{Theorem*}[section]

\newtheorem{lemma}[thm]{Lemma}
\newtheorem{corollary}[thm]{Corollary}
\newtheorem{prop}[thm]{Proposition}
\newtheorem{defn}[thm]{Definition}
\newtheorem{conj}[thm]{Conjecture}

\numberwithin{equation}{section}

\theoremstyle{definition}
\newtheorem{remark}[thm]{Remark}

\def\cN{\mathcal{N}}
\def\cM{\mathcal{M}}

\def\cH{\mathcal{H}}

\def\cE{\mathcal{E}}

\def \ve {\varepsilon}

\def\P{\mathbb{P}}

\def\E{\mathbb{E}}

\def\R{\mathbb{R}}
\def\Z{\mathbb{Z}}
\def\S{\mathbb{S}}

\def  \p- {p\textunderscore}

\def\ep{\varepsilon}

\newcommand{\abs}[1]{ \left \lvert #1 \right \rvert}

\newcommand{\de}{\delta}

\newcommand{\lr}[1]{\left( {#1}\right)}

\newcommand{\la}[1]{ \left\lvert {#1}\right\rvert}
\newcommand{\set}[1]{ \left\{ #1\right\}}

\newcommand{\inn}[1]{ \left\langle #1\right\rangle}

\newcommand{\1}{\mathbf{1}}

\newcommand{\Ls}{\mathcal{L}}

\newcommand{\tnrm}[1]{{\left\vert\kern-0.25ex\left\vert\kern-0.25ex\left\vert #1 
    \right\vert\kern-0.25ex\right\vert\kern-0.25ex\right\vert}}

\begin{document}
 \title{An upper bound on the smallest singular value of dense random combinatorial matrices}

\author{\textsc{Dongbin Li, Alexander E. Litvak,  and Tingzhou Yu}}
\date{}

\maketitle
\abstract{
Let $M$ be an $n\times n$ random matrix with entries in $\{0, 1\}$, where each row is independently and uniformly sampled from the set of all vectors in $\{0, 1\}^n$ containing exactly $d$ ones, with $d=pn$ for some fixed constant $p\in (0,1/2]$. A recent result of Tran states that the smallest singular value $s_n(M)$ is bounded below by $c_p n^{-1/2}$ with high probability. In this note, we establish a complementary upper bound for $s_n(M)$, proving that \[
  \forall \varepsilon >0 \qquad \P\lr{s_n(M)\le \frac{\sqrt{d}}{\varepsilon^2 n}}\ge 1-C_p\lr{\varepsilon+\frac{1}{\sqrt{d}}}, 
\]where $C_p$ is a positive constant depending only on $p$. This result confirms that the least singular value $s_n(M)$ of dense random combinatorial matrices is typically of the order $n^{-1/2}$.}

\bigskip

{\small
\noindent{\bf AMS 2010 Classification:}
primary: 60B20, 15B52, 46B06; 
secondary: 60C05, 05C80, 46B09.\\
\noindent
{\bf Keywords:} condition number, invertibility of random matrices,
random graphs, random matrices, regular graphs, singular probability, 
smallest singular value.}


\section{Introduction}\label{sec:intro}

Let $M$ be an $n\times n$ matrix with real entries,  recall that the smallest singular value of $M$, denoted by $s_n(M)$, can be defined as
\begin{equation*}
    s_n(M):=\inf_{x\in \mathbb{S}^{n-1}}\|Mx\|_2,
\end{equation*}
where $\mathbb{S}^{n-1}:=\{x\in \R^n: \|x\|_2=1\}$ is the unit sphere in $\R^n$. Note that $s_n(M)>0$ if and only if $M$ is invertible. When $M$ is invertible, $s_n(M)=1/\|M^{-1}\|$, where $\|M\|=\sup_{\|x\|_2=1}\|Mx\|_2$ is the operator norm of  $M : \ell_2^n \to \ell_2^n$.
The study of the smallest singular value is of fundamental importance due to its crucial role in theoretical questions 
and practical applications. In numerical analysis, the smallest singular value is closely related to the condition number, 
which measures the worst-case precision loss in computational problems \cite{spielman2004smoothed,tikhomirov2022quantitative}. 
Beyond numerical stability, the smallest singular value is essential in a variety of contexts, including the analysis of matrix 
singularity probabilities (for 0/1 matrices with independent entries see, for example, \cite{basak2021sharp,jain2020sharp,jain2020sharpsing,
litvak2022singularity,tikhomirov2020singularity}), and the study of the empirical spectral distribution of random matrices (see \cite{TVuniversality, bordenave2012around, RTcirclawmin} and references therein for the historical account of the problem).

 
The quantitative study of the smallest singular value of random matrices dates back to von Neumann and his collaborators, who 
conjectured that the smallest singular value is of order $n^{-1/2}$ with high probability (see \cite{MR157875, bams/1183511222}).
This estimate was later confirmed by Edelman  \cite{edelman1988eigenvalues} and Szarek  \cite{szarek1991condition}
  for random matrices with independent identically distributed (i.i.d.) standard Gaussian entries.
In  \cite{rudelson2008least, rudelson2008littlewood}, Rudelson and Vershynin established the sharp bound $\Theta(n^{-1/2})$ 
for square matrices with i.i.d. subgaussian entries, that is, $c n^{-1/2}\leq s_n(M)\leq C n^{-1/2}$. 
A similar behaviour for square matrices with i.i.d. entries possessing a finite second moment is obtained by combining the results by Rebrova and Tikhomirov \cite{rebrova2018coverings} and Tatarko \cite{tatarko2018upper}. For other results and bounds on the smallest singular value of square random matrices with independent entries, we refer the reader to 
\cite{kahn1995probability,   livshyts2021smallestTK,  tao2009inverse, tao2010random}. 
The study of the corresponding bounds for the rectangular matrices with i.i.d. subgaussian 
variables was started in \cite{litvak2005smallest} and further developed in 
\cite{rudelson2009smallest, livshyts2021smallest,  mendelson2014singular}.

In contrast, estimating the smallest singular value becomes considerably more challenging when dependencies exist between 
the matrix entries, and progress has been comparatively much slower. For example, 
only very recently  a sharp lower tail bound for the least singular value of $n \times n$ symmetric matrices with i.i.d. 
subgaussian entries has been obtained by Campos, Jenssen, Michelen and Sahasrabudhe \cite{campos2024least}, despite the analogous 
result for non-symmetric matrices being established nearly 17 years earlier by Rudelson and Vershynin \cite{rudelson2008littlewood}.
In recent years, considerable attention has also been given to models derived from combinatorics and graph theory, beyond symmetric 
random matrices. In particular, estimates of the least singular value of such models have been studied extensively  (see, for example, \cite{Cooksingularitydreg,jain2021approximate,
jain2022smallest,
litvak2017adjacency, litvak2019smallest, tran2020smallest} for some recent developments in this area). 
Our work focuses on one of such models. 

Let $d\leq  n$ be positive integers. We consider the set $\cM_{n,d}$  of all $n\times n$ matrices with entries in $\{0,1\}$, 
where each row is chosen independently and uniformly from the set of $\{0, 1\}^n$ vectors with exactly $d$ ones. An element 
$M\in \cM_{n,d}$ can be interpreted as the adjacency matrix of a directed graph (digraph) on $n$ labeled vertices, where each 
vertex has exactly $d$ outgoing edges. Alternatively, $M$ can be viewed as the adjacency matrix of a bipartite graph between two
disjoint sets of $n$ vertices each, with each vertex on the left having degree $d$. Note that while the out-degree (row sum) of 
each vertex is fixed at $d$, the in-degree (column sums) are random with mean $d$. This suggests that theoretically our model can 
produce matrices with zero columns, especially when $d$ is small. 
In fact, it is not difficult to verify  that for any (fixed) $\ep >0$ and 
$d \leq (1-\ep) \log n$, the random matrix $M$ asymptotically almost surely contains a zero column \cite{APsparseRCM, LLY-circ}. 
Moreover, the threshold $d=\log n$ 
 is sharp in the sense that for any given $\ep >0$, if $\min{(d,n-d)} \geq (1+\ep) \log n$, then a matrix $M$ chosen 
 uniformly at random from $\cM_{n,d}$ is asymptotically almost surely invertible, a result established by Ferber, Kwan, 
 and Sauermann \cite[Theorem 1.2]{ferber2022singularity}. 
It is noteworthy that this sharp threshold for singularity also arises in the context of random Bernoulli matrices (see, e.g., \cite{addario2014hitting, basak2021sharp, litvak2022singularity}).

The behavior of the least singular value of this model in the dense regime, specifically for degree $d=n/2$ was first studied 
in \cite{nguyen2013singularity,NVcircgivensum}. Nguyen and Vu showed that for any $C>0$, there exists $D>0$ such that 
\[
\mathbb{P}(s_n(M) < n^{-D}) \leq n^{-C}.
\]
Building on the earlier work of Ferber, Jain, Luh, and Samotij \cite{ferber2021counting}, Jain \cite{jain2021approximate} improved this bound, showing that 
\[
\mathbb{P}(s_n(M) < \varepsilon n^{-2}) \leq C \varepsilon + 2 e^{-n^{0.0001}}. 
\]
More recently, Tran \cite{tran2020smallest} showed that there exist constants $C, c>0$ such that for all $\varepsilon\ge 0$,
\[
\P(s_n(M)\le \varepsilon n^{-1/2})\le C\varepsilon+2e^{-cn}.
\]
Further refinements to the probability bounds for $s_n(M)$ when $d=n/2$ were obtained in \cite{jain2020sharp}. It was shown that for every $\varepsilon>0$, there exists $C=C(\varepsilon)$  such that for all $t\ge 0$,
\[
\P(s_n(M)\le t n^{-1/2})\le Ct+(1/2+\varepsilon)^n. 
\]
The result also confirms that the singularity probability of $M$ is $(1/2+o(1))^n$, as conjectured by Nguyen in \cite{nguyen2013singularity}.

Although the existing lower bounds provide insight into the anticipated scale of $s_n(M)$,
a matching probabilistic upper bound has not been obtained for the $\cM_{n, d}$ model in the dense regime. The main contribution of this paper is to establish this missing upper bound, thereby determining the sharp asymptotic order of $s_n(M)$ for this class of random matrices. Specifically, we prove the following main result in this note.

\begin{thm}\label{thm:upper_bdd_least}
 Let $p \in (0, 1/2]$ be a fixed constant and let $d, n$ be integers such that $d = pn$. Let $M$ be a random $n\times n$ matrix 
 uniformly drawn from      $\cM_{n,d}$.
Then there exists a constant $C_p>0$ depending only on $p$ such that for every $\varepsilon>0$,
    \[
    \P\lr{s_n(M)\le \frac{\sqrt{d}}{\varepsilon^2 n}}\ge 1-C_p\lr{\varepsilon+\frac{1}{\sqrt{d}}}.
    \]
\end{thm}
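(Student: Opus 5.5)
The strategy is the standard one for upper bounds on $s_n$ in the spirit of Rudelson--Vershynin: use the distance-to-hyperplane characterization through the inverse. For any invertible $M$ with rows $R_1,\dots,R_n$, let $H_i$ be the span of the rows other than $R_i$, and let $Y_i$ be a unit normal to $H_i$. Then the columns of $M^{-1}$ are $Y_i/\inn{R_i,Y_i}$. Hence $\|M^{-1}\| \ge \max_i 1/\mathrm{dist}(R_i,H_i)$, and more usefully
\[
\|M^{-1}\|_{HS}^2=\sum_i \mathrm{dist}(R_i,H_i)^{-2},
\qquad
\|M^{-1}\|\ge n^{-1/2}\|M^{-1}\|_{HS}.
\]
So $s_n(M)\le \sqrt n\,\big(\sum_i \mathrm{dist}(R_i,H_i)^{-2}\big)^{-1/2}$. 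It suffices to show that, with the stated probability, a positive proportion of rows (say at least $cn$ of them) satisfy $\mathrm{dist}(R_i,H_i)\le C\varepsilon^{-2}\sqrt d/\sqrt n$. Then $\sum_i \mathrm{dist}^{-2}\ge cn\cdot \varepsilon^4 n/(C^2 d)$, which gives $s_n(M)\le C'\sqrt d/(\varepsilon^2 n)$. The constants are absorbed by rescaling $\varepsilon$. If $M$ is singular the bound is trivial.

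The key step is a small-ball \emph{upper} estimate, i.e.\ an anti-concentration statement in reverse. Condition on the other rows and fix a unit normal $Y=Y_i$, which is independent of $R_i$. Then $\mathrm{dist}(R_i,H_i)=|\inn{R_i,Y}|$, and $R_i$ is a uniform vector with exactly $d$ ones. The all-ones vector $\1$ has $\inn{R_j,\1}=d$ for all $j$, so I would decompose $Y=a\1/\sqrt n+Y^\perp$. The main term is $\inn{R_i,Y^\perp}$. This is a linear statistic of a uniform $d$-subset (sampling without replacement). Its variance is $\frac{d(n-d)}{n(n-1)}\|Y^\perp\|_2^2\le p$, and its mean is $0$ since $\sum_k Y^\perp_k=0$. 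For the mean part, $a d/\sqrt n$ equals $a\sqrt p\sqrt d$. To handle it I would avoid a single row and instead use the structure of all rows simultaneously. Write $\inn{R_i,Y_i}=\delta_i$; the vector $M^{-1}\1$ is $\frac1d\1$ exactly, since $M\1=d\1$. Therefore $\|M^{-1}\|\ge \|M^{-1}\1\|/\|\1\|$ only gives $1/d$. That is useless, so instead I will work in the hyperplane $\1^\perp$ to remove the mean direction. Concretely, restrict to $x\in \1^\perp$ and bound $s_n(M)\le \inf_{x\in \S^{n-1}\cap\1^\perp}\|Mx\|_2$. Then I apply the negative second-moment argument to the restricted operator, or equivalently replace $M$ by $M-\frac dn \1\1^T$ (centered rows, whose rows are orthogonal to $\1$, acting on $\1^\perp$). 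The centered rows $\bar R_i=R_i-\frac dn\1$ live in $\1^\perp$, an $(n-1)$-dimensional space, and have covariance $\approx p(1-p)$ times the identity on $\1^\perp$. With $n$ rows in $n-1$ dimensions, I would take the first $n-1$ centered rows and apply the Hilbert--Schmidt/distance bound to this square $(n-1)\times(n-1)$ system.

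The probabilistic input is then: for a fixed unit $Y\in\1^\perp$, $\P(|\inn{R,Y}|\le t)\ge c\,t$ roughly for $t\ge C/\sqrt d$, up to an error. I would obtain this from a Berry--Esseen theorem for sampling without replacement (Höffding/Bikelis type). If $\|Y\|_\infty$ is small, $\inn{R,Y}$ is close to $N(0,\sigma^2)$ with $\sigma^2\asymp p(1-p)$. The Berry--Esseen error is $\lesssim \|Y\|_3^3/\sigma^3\lesssim\|Y\|_\infty$, and this error accounts for the $1/\sqrt d$ term. The main obstacle is that the normals $Y_i$ are random and could be localized, making the CLT fail. I would handle this as in Rudelson--Vershynin's upper bound by simply not needing every row. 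Take $t=\varepsilon^{-1}$ times the typical scale, so that $\P(|\inn{R_i,Y_i}|\le t)\ge 1-C\varepsilon$ follows just from Chebyshev: $\E\inn{R,Y}^2\le p$ for \emph{any} unit $Y\in\1^\perp$, and no CLT is needed. Then $\E\#\{i:\mathrm{dist}_i>t\}\le C\varepsilon^2 n$, and Markov shows that at least $n/2$ rows have small distance with probability $\ge 1-C\varepsilon$. That already gives $s_n\lesssim \varepsilon^{-1}/\sqrt n$. The bound $\sqrt d/(\varepsilon^2 n)=\sqrt p/(\varepsilon^2\sqrt n)$ is weaker than this, so Chebyshev plus Markov suffices up to constants. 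The $1/\sqrt d$ term would only absorb the error from passing between $M$ and its centered version on $\1^\perp$ (a rank-one perturbation, controlled by interlacing of singular values). This comparison is the step I would check most carefully.
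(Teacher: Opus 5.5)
There is a genuine gap, and it is in your first reduction. If at least $cn$ rows satisfy $\operatorname{dist}(R_i,H_i)\le T$, then $\|M^{-1}\|_{HS}^2\ge cn/T^2$, and your inequality gives
\[
s_n(M)\le \sqrt n\,\|M^{-1}\|_{HS}^{-1}\le \sqrt n\cdot\frac{T}{\sqrt{cn}}=\frac{T}{\sqrt c}.
\]
So the Hilbert--Schmidt route only bounds $s_n(M)$ by the typical distance itself. With $T=C\ve^{-2}\sqrt{d/n}$ this gives $s_n(M)\lesssim \ve^{-2}\sqrt p$, which is of order one. The step ``which gives $s_n(M)\le C'\sqrt d/(\ve^2 n)$'' drops a factor $\sqrt n$; your computation actually yields $C'\sqrt d/(\ve^2\sqrt n)$. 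The same slip occurs in the Chebyshev--Markov paragraph: $n/2$ rows with distance at most $\ve^{-1}\sqrt p$ give $s_n(M)\lesssim\sqrt p/\ve$, not $\ve^{-1}/\sqrt n$.

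Sharper distance estimates cannot fix this. The distances really are of order $\sqrt p$ for most rows: their mean square is $\asymp p$ by the paper's exact computation, and the small-ball bound shows they are rarely much smaller. The route would therefore need a few exceptionally tiny distances to make $\sum_i\operatorname{dist}(R_i,H_i)^{-2}$ of order $n^2$. One expects instead $\|M^{-1}\|_{HS}\asymp\sqrt n\asymp\|M^{-1}\|$, so $\|M^{-1}\|\ge n^{-1/2}\|M^{-1}\|_{HS}$ wastes exactly the factor you need, and $\|M^{-1}\|\ge\max_i\operatorname{dist}(R_i,H_i)^{-1}$ is no better. A quick sanity check: $\sqrt p\,I_n$ has every row-to-hyperplane distance equal to $\sqrt p$ and $s_n=\sqrt p$. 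So no argument that uses only \emph{upper} bounds on distances (which is all Chebyshev and Markov provide) can force $s_n\lesssim n^{-1/2}$.

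The missing ingredient is an anticoncentration (small-ball \emph{upper}) estimate, used inside the Rudelson--Vershynin test-vector argument rather than a norm comparison. The paper tests $(M^T)^{-1}$ on the single vector $x=X_1-P_1X_1$. Its norm is $\lesssim u\sqrt p$ by a second-moment computation. There, the component of the normal along $\1_n$ is controlled via $\inn{f_1,\sum_{i\ge2}X_i}=0$ and the binomial column sums, which disposes of the mean term you were worried about. The paper then shows that $\|(M^T)^{-1}x\|_2^2\ge\sum_{k=2}^n\inn{X_1,Y_k}^2=\sum_{k\ge2}a_k^2/b_k^2$ is of order $n$, because each of the $n-1$ terms is of order one.

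The upper bounds on $b_k=\operatorname{dist}(X_k,H_{1,k})$ come from Chebyshev, as in your plan. But one also needs $a_k=|\inn{X_1,Y_k}|/\|Y_k\|_2\gtrsim\ve\sqrt p$ with probability $1-C(\ve+d^{-1/2})$. This requires three steps:
\begin{enumerate}
\item show that the unit normal to $H_{1,k}$ is not almost constant (\cref{prop: inver_cons_pcons}, built from \cref{lem: invert_indiv} and a net);
\item deduce that it has CLCD at least $\sqrt{\delta n}/7$ (\cref{lem: clcd_sq_bdd});
\item apply the small-ball bound \cref{lem:small_ball}.
\end{enumerate}
The $C_p/\sqrt d$ term in the theorem comes from the term $\sqrt{n/d}/\mathrm{CLCD}$ in that lemma, not from a rank-one comparison. \cref{prop:Markov} then converts the per-$k$ bounds into $\sum_k a_k^2/b_k^2\gtrsim n\ve^2/t^2$, and the choice $u=t=\ve^{-1/2}$ finishes.

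Two further points. Your Berry--Esseen idea points the wrong way: a lower bound on $\P(|\inn{R,Y}|\le t)$ again only says that distances are small. Keeping just the first $n-1$ centered rows is also backwards: deleting rows can only decrease the smallest singular value, so an upper bound for the truncated system does not transfer to $M$.
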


\begin{remark}
Theorem~\ref{thm:upper_bdd_least} shows that in the dense regime ($d=pn$, $p$ is fixed) the upper bound  $s_n(M)=O(\sqrt{d}/n)$ holds with high probability, in particular, it implies that $s_n(M)=O_p(n^{-1/2})$.
The matching lower bound on $s_n(M)$ was proved in \cite{tran2020smallest} (see the third part of the remark following Theorem~1.2 
there, see also \cite{jain2020sharp} for $d=n/2$). Thus, our upper bound confirms that in the dense regime 
the typical order of the least singular value $s_n(M)$ is of order $n^{-1/2}$. 
\end{remark}

\begin{remark}
By Theorem~4.1 in \cite{LLY-circ}, for some absolute constant $C>0$,   $\|M-\E M\|\leq C\sqrt{d}$ with probability 
at least $1-2/n$, whenever $d\geq \log n$ (for $d=n/2$ this bound with better probability was obtained in 
\cite[Proposition 2.8]{tran2020smallest} (see also \cite{jain2020sharp}) and  note that the proof in 
\cite{tran2020smallest} can be extended to $d=pn$). Since all entries of $\E M$ are $d/n$, its norm is $d$. Therefore, 
by the triangle inequality,  $\|M\|\leq 2d $ with probability at least $1-2/n$ for large enough $d$.  Note that 
 $M \1_n=d\1_n$, where $\1_n=(1,1,\dots,1)$, hence $\|M\| \geq d$. Thus, the largest singular value of $M$, $s_1(M)=\|M\|$ is of order $d$, therefore, 
 in the dense regime $d=pn$,  the condition number 
 $$\kappa(M):=s_{1}(M)/s_{n}(M) \approx n^{3/2}$$ 
 (equivalence is up to constants depending only on $p$  and with probability at least $1-C_p'/\sqrt{n}$). 
 This has important implications for numerical computation. Indeed, it is well known that condition number is closely linked to computational complexity, particularly in the context of numerical linear algebra and iterative methods. In general, a large condition number typically leads to slower convergence (see, e.g., \cite[Chapters~2 and 11]{GVanMatrix}).
 We would also like to mention that in general the operator norm of $M$ could be as large as its Hilbert-Schmidt norm, 
 $\sqrt{dn}$, as the example of matrix having first  $d$ columns equal to $\1_n$ and $n-d$ zero columns  shows 
(in this case the norm is attained on the vector $e_{1}+e_{2}+...+e_{d}$).  
\end{remark}

\begin{remark}
It is also natural to consider the behavior of $s_n(M)$ in the \textit{sparse} regime, where $C_0\log n \leq d=o(n)$ for an absolute constant $C_0>1$. 
For such sparse random matrices, it is speculated that the typical order of $s_n(M)$ is $\sqrt{d}/n$ (see also a similar conjecture in \cite{litvak2019smallest} 
 for $d$-regular matrices).  Notably, this conjectured order $\sqrt{d}/n$ appears in the upper bound stated in \cref{thm:upper_bdd_least}. However, despite the  well-understood by now behavior of the least singular value in the dense case, its sparse counterpart has remained far less accessible. The only known  quantitative lower bound for $s_n(M)$ has been recently established in \cite{LLY-circ}, where the authors proved that for large enough $n$, some absolute positive  constants $C$ and $c$, and for   $d\geq \log^{2} n$, 
\[
   \P(s_n(M)\le  n^{-c})\le C/\sqrt{d} .
\] 
 While the expected order $\sqrt{d}/n$ remains out of reach in the sparse regime, our numerical simulations offer supporting evidence for the conjecture. Figures \ref{fig:sim_power} and \ref{fig:sim_log} present log-log plots of the mean $s_n(M)$ versus $n$ for  the intermediate regimes $d=n^{1/3}$ and $d=5\log n$, respectively. In both cases, the observed data aligns well with a reference line proportional to $\sqrt{d}/n$, indicating that this order likely holds in the sparse regime as well.
\end{remark}

\begin{figure}[htbp]
    \centering 
    \begin{subfigure}[b]{0.48\textwidth} 
        \includegraphics[width=\linewidth]{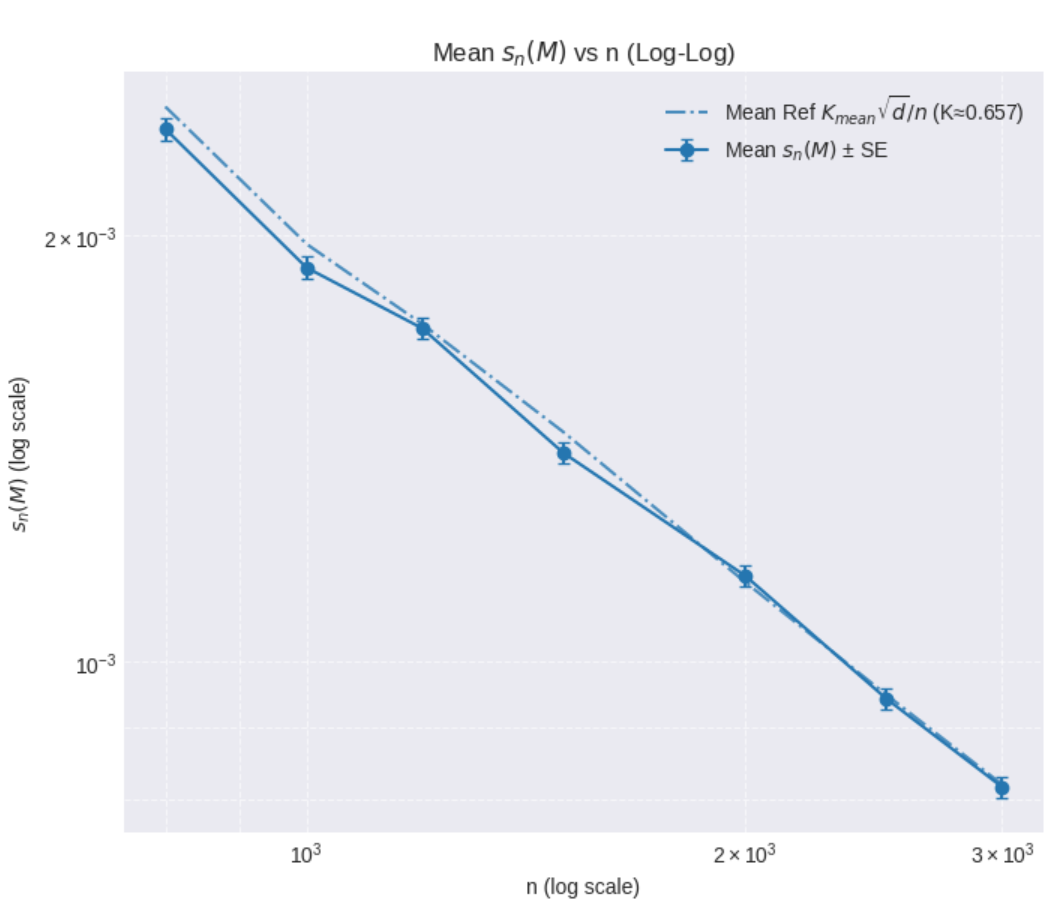} 
        \caption{Simulation for $d=\lfloor n^{1/3} \rfloor$} 
        \label{fig:sim_power} 
    \end{subfigure}
    \hfill 
    \begin{subfigure}[b]{0.48\textwidth} 
        \includegraphics[width=\linewidth]{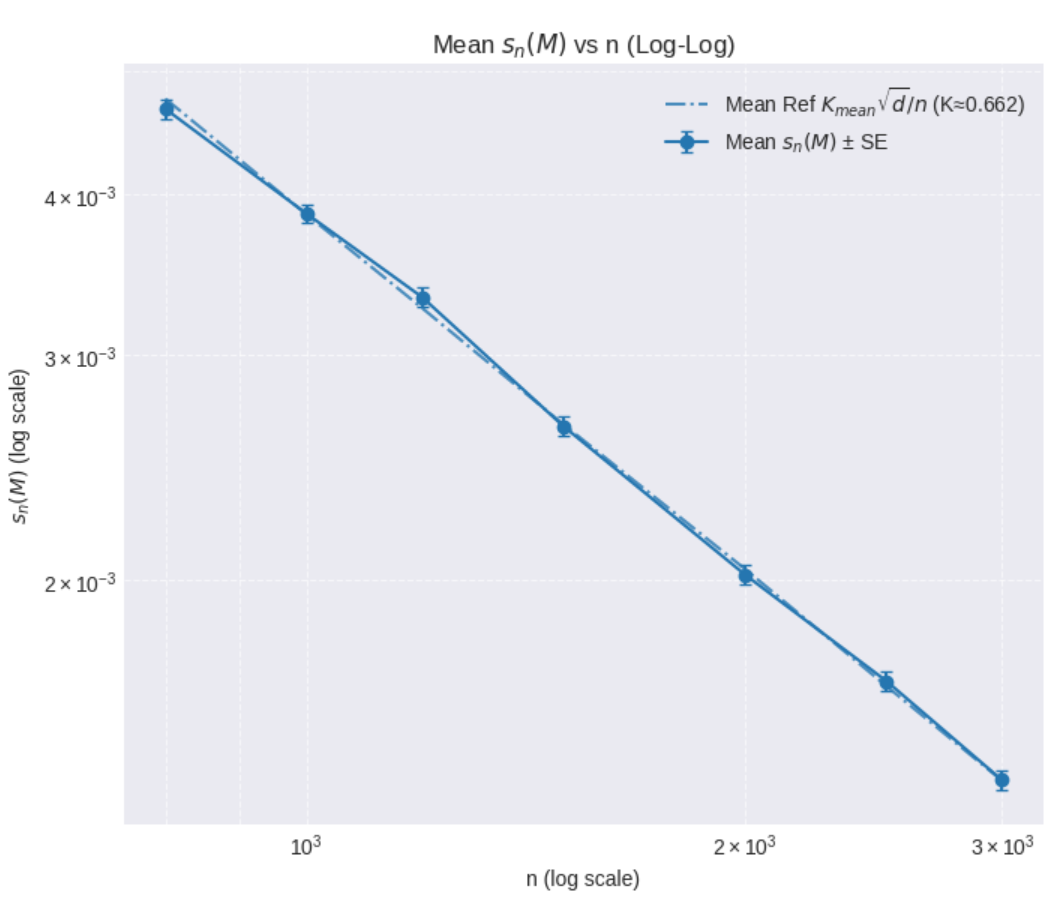} 
        \caption{Simulation for $d=5 \lfloor \log n \rfloor$} 
        \label{fig:sim_log} 
    \end{subfigure}

    \caption{Log-log plots of the mean smallest singular value $\mathbb{E}[s_n(M)]$ versus $n$. Points show simulation results (mean $\pm$ standard error). The dashed lines represent reference slopes proportional to $\sqrt{d}/n$. (a) Case $d=\lfloor n^{1/3} \rfloor$. (b) Case $d=5 \lfloor \log n \rfloor$.} 
    \label{fig:simulations} 
\end{figure}

\begin{conj} \label{conjupper}
    For every $\ve, \delta >0$ there is a constant $C=C(\ve, \de)$ 
    depending only on $\ve$, $\de$ such that the following holds. 
     Let $d\le n$ be integers that satisfy $\min(d,n-d) \geq (1+\ve) \log n$. Let $M$ be a random $n\times n$  matrix uniformly drawn from  $\cM_{n,d}$.
  Then   
     \[\P\lr{s_n(M) > C\, \frac{\sqrt{d}}{ n}} \leq \delta.
    \]
\end{conj}
\begin{remark}
    For fixed $\ve>0$, and $d \leq (1-\ve) \log n$, it is not difficult to check that with high probability, $M$ contains a zero column. Therefore, $s_n(M)=0$ with high probability whenever $\min(d,n-d) \leq (1-\ve) \log n$ and Conjecture \ref{conjupper} holds in this regime.
\end{remark}

\paragraph{Organization of the paper.}  This paper is structured as follows. In \cref{sec: preli}, we introduce essential preliminaries, including key concepts such as biorthogonal systems, almost constant vectors, and concentration inequalities. In \cref{sec: inv_almost}, we establish the invertibility of the matrix $M$ on almost constant vectors. The proof of \cref{thm:upper_bdd_least} is presented in \cref{sec: proof_main}.


\section{Preliminaries}\label{sec: preli}

\subsection{Notation.} We use the following standard notations.
Let $\{e_1,e_2,\dots, e_n\}$ be the canonical basis of $\R^n$ 
equipped with the canonical inner product $\langle\cdot,\cdot \rangle$ 
and the canonical Euclidean norm  $\|\cdot \|_2$.  
We use $[n] := \{1, 2, \dots, n\}$ to represent the set of the first 
$n$ positive integers.  
The vector $\mathbf{1}_n \in \mathbb{R}^n$ denotes the vector with each 
component equals $1$. 
The set $$\mathbb{S}^{n-1} := \{x \in \mathbb{R}^n : \|x\|_2 = 1\}$$ represents the  Euclidean unit sphere in $\mathbb{R}^n$. 
Matrices are denoted by uppercase letters, e.g., $M$, with $M^T$ indicating 
the transpose of $M$. For a matrix $M$, $R_i=R_i(M)$ denotes its $i$-th row, 
and $X_i := R_i^T$ is the corresponding column vector. 
The distance from a vector $v$ to a subspace $H$ is denoted by 
$$\operatorname{dist}(v, H) := \inf_{w \in H} \|v - w\|_2.$$ 
The operator norm of an $m\times k$  matrix $A$, considered as an operator acting between 
corresponding Euclidean spaces (that is, $A: \ell_2^k\to \ell_2^m$) is denoted by $\|A\|$. Respectively, 
given a subspace $V\subset \R^k$, $\|A_{|V}\|$ denotes the operator norm of $A$ restricted to $V$. 
By $C, c, C_0, C_1, \cdots$ we denote either positive absolute constants independent of all parameters or positive constants that may depend on the parameter $p=d/n$. They  may vary from line to line.

\subsection{Biorthogonal systems}
Biorthogonal systems play an important role in the analysis of 
 the invertibility of matrices and the structure of their inverses (see, e.g., \cite{rudelson2008least, tatarko2018upper}).
 Let $\{E_k\}_{k=1}^m$ and $\{F_k\}_{k=1}^m$ be two sets of vectors in a Hilbert space $\cH$ with inner product 
 $\inn{\cdot, \cdot}$.  We say that the system $\{E_k, F_k\}_{k=1}^m$  is a \textit{biorthogonal system} in $\cH$ 
 if it satisfies $\langle E_i, F_j\rangle=\delta_{ij}$ for all $i, j\in [m]$, where $\delta_{ij}$ is the Kronecker delta. 
 Note that if $\{E_k, F_k\}_{k=1}^m$  is a biorthogonal system, then 
 both $\{E_k\}_{k=1}^m$ and $\{F_k\}_{k=1}^m$ are linearly independent. Thus, if the dimension of $\cH$ is $m$ then 
 $\{E_k\}_{k=1}^m$ spans $\cH$ as well as $\{F_k\}_{k=1}^m$.   We summarize key properties of biorthogonal systems (see, e.g., \cite[Proposition 2.1]{rudelson2008least}) relevant to the proof of our main result in the following proposition. 

\begin{prop}\label{prop_biorth} Let  $\cH$ be an         $n$-dimensional Hilbert space. 
    \begin{enumerate}
        \item For any invertible $n\times n$ matrix $A$, the system $\{Ae_k, (A^{-1})^Te_k\}_{k=1}^n$ forms a  biorthogonal system in $\R^n$. 
        \item Given a  linearly independent set of vectors $\{E_k\}_{k=1}^n$ in an 
        $n$-dimensional Hilbert space $\cH$, there exist unique vectors $\{F_k\}_{k=1}^n$ such that $\{E_k, F_k\}_{k=1}^n$ is a  biorthogonal system in $\cH$. 
        \item For a  biorthogonal system $\{E_k, F_k\}_{k=1}^n$ in an 
        $n$-dimensional Hilbert space $\cH$, the Euclidean norm of each $F_k$ satisfies
        \begin{equation}\label{eq:bioth}
            \|F_k\|_2=\frac{1}{\mbox{dist}(E_k, H_k)}, \qquad k=1, \dots, n,
        \end{equation}
        where $H_k=\mbox{span}\{E_i\}_{i\neq k}$ denotes the subspace spanned by all $E_i$ except $E_k$, and $\operatorname{dist}(E_k, H_k)$ is the Euclidean distance from $E_k$ to $H_k$.
    \end{enumerate}
\end{prop}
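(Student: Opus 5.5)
The statement to prove is Proposition~\ref{prop_biorth}, which is standard linear algebra; I would prove its three parts in order.

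For part 1, take an invertible $A$ and set $E_k=Ae_k$, $F_k=(A^{-1})^Te_k$. Then
\[
\inn{E_i,F_j}=\inn{Ae_i,(A^{-1})^Te_j}=\inn{A^{-1}Ae_i,e_j}=\inn{e_i,e_j}=\delta_{ij}.
\]

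For part 2, fix an orthonormal basis of $\cH$ and identify $\cH$ with $\R^n$. Let $A$ be the matrix whose $k$-th column is $E_k$, so $Ae_k=E_k$. Because the $E_k$ are linearly independent, $A$ is invertible. Part 1 then gives existence, with $F_k=(A^{-1})^Te_k$. For uniqueness, suppose $\{E_k,F_k\}$ and $\{E_k,F'_k\}$ are both biorthogonal. Then $F_k-F'_k$ is orthogonal to every $E_i$. Since the $E_i$ span $\cH$, this forces $F_k=F'_k$.

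For part 3, fix $k$. By biorthogonality, $F_k$ is orthogonal to every $E_i$ with $i\neq k$, so $F_k\in H_k^{\perp}$. The spaces $H_k$ has dimension $n-1$, so $H_k^\perp$ is one-dimensional. Write $E_k=P E_k+(E_k-PE_k)$, where $P$ is the orthogonal projection onto $H_k^\perp$; the second term lies in $H_k$. Then $\operatorname{dist}(E_k,H_k)=\|PE_k\|_2$. Note $PE_k\neq 0$, since otherwise $E_k\in H_k$, contradicting linear independence.

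Since $F_k$ and $PE_k$ are nonzero vectors in the same one-dimensional space, they are parallel, and $\inn{E_k,F_k}=\inn{PE_k,F_k}$. Hence
\[
1=\inn{E_k,F_k}=\inn{PE_k,F_k}=\pm\|PE_k\|_2\|F_k\|_2 .
\]
The sign must be $+$ because the left side equals $1$. This gives $\|F_k\|_2=1/\operatorname{dist}(E_k,H_k)$, which is \eqref{eq:bioth}.

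None of these steps is a real obstacle. The only points needing care are noting that $PE_k\neq0$ and using the one-dimensionality of $H_k^\perp$ to turn the inner product into a product of norms.
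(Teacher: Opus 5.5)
Your proof is correct and is the standard argument. The paper gives no proof of its own and simply cites \cite[Proposition 2.1]{rudelson2008least}; your route (part 2 from part 1 via the matrix with columns $E_k$, and part 3 via $F_k$ spanning the one-dimensional space $H_k^\perp$) is the usual proof of that result. The one thing to make explicit in part 3 is that linear independence of the $E_k$, and hence $\dim H_k=n-1$, is not a hypothesis there. It follows from biorthogonality by pairing a vanishing combination $\sum_i c_iE_i=0$ with $F_j$, as the paper notes just before the proposition.
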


\subsection{Almost constant vectors}

To obtain an upper bound on the smallest singular value $s_n(M)$, we consider a decomposition of the unit sphere $\S^{n-1}$ 
into \textit{almost constant} and \textit{non-almost constant vectors}. The notion of almost constant vectors was introduced 
in a similar context in earlier works  \cite{Cooksingularitydreg, litvak2017adjacency} and then developed in \cite{litvak2019smallest, litvak2022singularity}. 

\begin{defn}\label{def:almost_constant}
    For parameters $\delta, \rho \in (0,1)$, let $\mbox{Cons}_{\delta, \rho}\subset \S^{n-1}$  be the set of vectors 
    $v\in \S^{n-1}$ for which there exists a real number $\lambda$ such that $|v_i-\lambda|\leq \rho/\sqrt{n}$ holds for 
    at least $(1-\delta)n$ coordinates $i\in [n]$. A vector $v\in \S^{n-1}$ is called non-almost constant if it does 
    not belong to $\mbox{Cons}_{\delta, \rho}$.
\end{defn}

\subsection{Concentration}

Let $M$ be an $m\times n$ random matrix with i.i.d. rows, such that each row 
is uniformly drawn from $n$-dimensional $0/1$ vectors having exactly $d$ ones. 
The main result of this section is an individual concentration bound for the norm $\|Mv\|_2$, where $v$ is a fixed unit vector. We will combine this individual probability with net argument to obtain the invertibility on the set of almost constant vectors in \cref{prop: inver_cons_pcons} below.

For the case $d=n/2$, Lemma 2.9 in \cite{tran2020smallest} established such a result using Bernstein's inequality. Here we provide an alternative proof using the Paley-Zygmund inequality and Chernoff bounds for the sum of Bernoulli random variables, extending the result to any fixed $p=d/n\in (0, 1/2]$.

\begin{lemma}\label{lem: invert_indiv}
     Let $v\in \S^{n-1}$ be a fixed vector and $p\in (0, 1/2]$ be a fixed constant. Let $d\leq n$ be positive integers such that $d=pn$. There exist positive constants $C_{\ref{lem: invert_indiv}}, c_{\ref{lem: invert_indiv}}$ depending only on $p$ such that the following holds. Let $M$ be a random $m\times n$ matrix, $n/2\le m\le n$, whose rows are independent random vectors uniformly distributed on the set of $\{0, 1\}^n$ with  exactly $d$ ones. Then
    \[
    \P\lr{\|Mv\|_2\le c_{\ref{lem: invert_indiv}}\sqrt{pn}}\le e^{-C_{\ref{lem: invert_indiv}}n}.
    \]
\end{lemma}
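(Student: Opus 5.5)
Our plan is to reduce the statement to a bound for a single row and then tensorize using the independence of rows. Write $R$ for a random vector uniform on the $0/1$ vectors with exactly $d$ ones, and set $\xi=\inn{R,v}$. Then $\|Mv\|_2^2=\sum_{i=1}^m \xi_i^2$, where the $\xi_i$ are i.i.d. copies of $\xi$.

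The first step is to show that $\xi^2$ is not too concentrated near zero. Concretely, we want $\P(\xi^2\ge c_1 p)\ge q$ for constants $c_1,q>0$ depending only on $p$. We get this from the Paley--Zygmund inequality applied to $\xi^2$, which requires
\[
\E\xi^2\ge c\,p \qquad\text{and}\qquad \E\xi^4\le C\,(\E\xi^2)^2 .
\]
For the second moment we compute directly. Since $\E R_j=p$ and, for $j\ne k$, $\E R_jR_k=\frac{d(d-1)}{n(n-1)}$, we get
\[
\E\xi^2=p\|v\|_2^2+\frac{d(d-1)}{n(n-1)}\Big(\big(\textstyle\sum_j v_j\big)^2-\|v\|_2^2\Big).
\]
Because $\frac{d(d-1)}{n(n-1)}\le p^2$, this is at least
\[
\Big(p-\tfrac{d(d-1)}{n(n-1)}\Big)\|v\|_2^2 \;\ge\; p(1-p)-o(1)\;\ge\; p/3 .
\]
Here $p\le 1/2$ is what keeps this bound away from zero.

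The fourth moment bound is the main obstacle, because the coordinates of $R$ are dependent and $v$ may have a large mean component. We would handle it by splitting $v=\lambda\1_n/\sqrt n+w$ with $w\perp\1_n$. Then
\[
\xi=\lambda d/\sqrt n+\inn{R,w},
\]
and the first term is deterministic, since $\inn{R,\1_n}=d$. For the fluctuating part $\inn{R,w}$ we would bound the fourth moment by comparison with independent Bernoulli($p$) coordinates. One route is to realize $R$ as a uniformly random $d$-subset and use that sampling without replacement is dominated in convex order by sampling with replacement (Hoeffding's comparison), which gives $\E\inn{R,w}^4\le C\|w\|_2^4+C p^2\|w\|_2^4$ up to constants.

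Alternatively, we can bypass the fourth moment entirely and use a Chernoff-type route, which is more in the spirit of the announced proof. If $|\lambda|\ge 1/2$, the deterministic part already gives $|\xi|\ge \sqrt{pn}\,/2 - |\inn{R,w}|$, and with constant probability $|\inn{R,w}|\le C\sqrt p$ by Chebyshev, since $\E\inn{R,w}^2\le p\|w\|_2^2$. In that case $|\xi|\gtrsim\sqrt{pn}$ with constant probability. If $|\lambda|<1/2$, then $\|w\|_2\ge 1/2$, and Paley--Zygmund applied to $\inn{R,w}$ uses only second moments plus the comparison bound above.

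Once $\P(\xi_i^2\ge c_1p)\ge q$ holds, let $N=\#\{i:\xi_i^2\ge c_1 p\}$, which is a Binomial$(m,q)$ random variable. A Chernoff bound gives
\[
\P(N\le qm/2)\le e^{-q m/8}\le e^{-qn/16},
\]
using $m\ge n/2$. On the complement we have $\|Mv\|_2^2\ge (qm/2)\,c_1p\ge c_1 q\,pn/4$. This yields the statement with $c_{\ref{lem: invert_indiv}}=\sqrt{c_1q}/2$ and $C_{\ref{lem: invert_indiv}}=q/16$, both depending only on $p$.
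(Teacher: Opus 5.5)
\textbf{Verdict.} Your main route is correct but obtains the one-row estimate differently from the paper; your alternative route has a real gap, and one step of the main route should be written out.

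\textbf{Comparison with the paper.} The overall architecture is the same: a one-row small-ball bound via Paley--Zygmund, then a Chernoff bound over the $m\ge n/2$ independent rows. The paper gets the one-row bound from the slice concentration inequality (Lemma~\ref{lem: com_con_ineq}), which gives sub-Gaussian tails for $\xi-\mu$ with an absolute variance proxy. This settles the case $|\mu|\ge 2\delta$ directly. When $|\mu|<2\delta$ it bounds $\E\xi^4$ by an absolute constant, and the variance formula plus Paley--Zygmund finish; that step needs $n$ large.

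You instead use three observations:
\begin{enumerate}
\item $\xi-\mu=\inn{R,w}$ exactly, where $w$ is the projection of $v$ onto $\1_n^\perp$.
\item $\E\xi^2\ge p-\frac{d(d-1)}{n(n-1)}\ge p(1-p)$ for \emph{every} unit $v$, so no $o(1)$ term and no case split are needed.
\item $\E\inn{R,w}^4\le p\|w\|_4^4+3p^2\|w\|_2^4$, by Hoeffding's comparison of sampling without and with replacement.
\end{enumerate}
This replaces the slice concentration lemma by a classical convex-ordering fact. It needs no case analysis on $\mu$ and no largeness of $n$, and it gives $q$ of order $p$ rather than the paper's $p^2$. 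The paper's tool proves more (full sub-Gaussian tails) than this lemma needs.

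\textbf{Step to write out.} You should make explicit the passage from $Z=\inn{R,w}$ to $\xi=a+Z$, where $a=\lambda p\sqrt n=\mu$. From $\E\xi^4\le 8(a^4+\E Z^4)$ and $(\E\xi^2)^2\ge\max\{a^4,\,p^2(1-p)^2\}$ you get $\E\xi^4\le C_p(\E\xi^2)^2$ uniformly in $a$. That is exactly what Paley--Zygmund for $\xi^2$ requires.

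\textbf{Gap in the alternative route.} It does not work as written.
\begin{itemize}
\item A harmless slip first: $\lambda d/\sqrt n=\lambda p\sqrt n$, not $\lambda\sqrt{pn}$.
\item The real problem is the case $|\lambda|<1/2$. There the shift $a=\lambda p\sqrt n$ can be anywhere in $(-p\sqrt n/2,\,p\sqrt n/2)$, in particular of order $\sqrt p$. Paley--Zygmund applied to $Z$ alone gives only $\P(|Z|\ge c\sqrt p)\ge q$. That says nothing about $\P(|a+Z|\ge c\sqrt p)$, which would need anticoncentration of $Z$ around the arbitrary point $-a$.
\item To fix it, apply Paley--Zygmund to $(a+Z)^2$, which is just your main route. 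Alternatively, split at $|a|\asymp\sqrt p$ instead of at $|\lambda|=1/2$, as the paper does with $|\mu|\ge 2\delta$.
\item This route also does not ``bypass the fourth moment'': Paley--Zygmund for $Z$ still uses it.
\end{itemize}

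\textbf{Minor point.} $N$ is Binomial$(m,q')$ with $q'\ge q$, not Binomial$(m,q)$. That is still enough for your Chernoff step.
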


The next lemma provides a concentration inequality for functions on the Boolean hypercube, which we will use in the proof of \cref{lem: invert_indiv}.
\begin{lemma}\cite[Lemma 2.1]{KwanSudakovTran}\label{lem: com_con_ineq}
    Let $w\in \R^n$ be a positive vector and $f: \{0,1\}^n\to \R$ be a function  such that 
    \begin{equation}\label{tran-1}
        |f(x_1, \dots, x_{i-1}, 1, x_{i+1}, \dots, x_n)-f(x_1, \dots, x_{i-1}, 0, x_{i+1}, \dots, x_n)|\le w_i,
    \end{equation}
    for all $x\in \{0,1\}^n$ and all $i\in [n]$. Suppose that $\eta$ is a random vector uniformly distributed over $\{0,1\}^n$ with exactly $d$ ones, where $1\le d \le n$.  Then
    \begin{equation*}
    \forall t \ge 0\qquad     \P(|f(\eta)-\E f(\eta)|\ge t)\le 2\exp\lr{-\frac{t^2}{8\sum_{i=1}^n w_i^2}}.
    \end{equation*}
\end{lemma}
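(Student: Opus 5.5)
This lemma is quoted from \cite{KwanSudakovTran}, but I would prove it directly with a Doob martingale combined with a swap coupling. Relabel the coordinates so that $w_1\ge w_2\ge \dots\ge w_n$; the hypothesis \eqref{tran-1} and the law of $\eta$ are invariant under this relabelling. Define
\[
X_k:=\E\bigl[f(\eta)\,\big|\,\eta_1,\dots,\eta_k\bigr],\qquad k=0,1,\dots,n .
\]
Then $X_0=\E f(\eta)$ and $X_n=f(\eta)$. The goal is to show that, conditionally on $\eta_1,\dots,\eta_{k-1}$, the increment $X_k-X_{k-1}$ lies in an interval of length at most $2w_k$. Once this holds, the Azuma--Hoeffding inequality gives
\[
\P(|f(\eta)-\E f(\eta)|\ge t)\le 2\exp\Bigl(-\frac{2t^2}{\sum_k (2w_k)^2}\Bigr),
\]
which is stronger than the claimed bound with constant $8$.

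\textbf{Step 1 (the increment).} Fix a history $(\eta_1,\dots,\eta_{k-1})$. Given this history, $X_k$ takes only two values, $A_1$ and $A_0$, according to whether $\eta_k=1$ or $\eta_k=0$. The increment $X_k-X_{k-1}$ is therefore confined to an interval of length $|A_1-A_0|$. If one of the events $\{\eta_k=1\}$ or $\{\eta_k=0\}$ is impossible given the history, the increment is simply $0$. So it suffices to bound $|A_1-A_0|$.

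Given the history and $\eta_k=1$, the remaining coordinates $(\eta_{k+1},\dots,\eta_n)$ are uniform over $0/1$ vectors with a fixed number $r$ of ones. Given $\eta_k=0$, they are uniform with $r+1$ ones; this requires $r+1\le n-k$, since otherwise that case is impossible.

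\textbf{Step 2 (the coupling).} Start from a sample $\eta$ conditioned on $\eta_k=1$. Choose $J$ uniformly among the indices $j>k$ with $\eta_j=0$, and swap the values at positions $k$ and $J$. This produces a configuration $\eta'$ with $\eta'_k=0$ and $r+1$ ones among positions $k+1,\dots,n$.

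A symmetry count shows that $\eta'$ is uniformly distributed on the target set. Every target configuration has exactly $r+1$ preimages, one for each of its ones beyond position $k$, and each preimage reaches it with the same probability $1/(n-k-r)$. Hence the map is a valid coupling of the two conditional laws.

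The swap changes two coordinates, so by \eqref{tran-1} it changes $f$ by at most $w_k+w_J$. Because the $w$'s are sorted in decreasing order and $J>k$, we have $w_J\le w_k$. Taking expectations over the coupling gives $|A_1-A_0|\le 2w_k$.

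\textbf{Step 3 (where care is needed).} The main point is that the ordering trick is essential. Without sorting the weights, the coupling only yields the bound $w_k+\text{(average of } w_j,\ j>k)$. One would then need a Hardy-type inequality to control the sum of squares, and the constant $8$ would be lost. With the decreasing ordering, everything reduces to the standard Azuma--Hoeffding inequality for bounded martingale differences, applied with ranges $c_k=2w_k$. The degenerate cases in which $\eta_k$ is determined by the history contribute zero increment, so they cause no difficulty.
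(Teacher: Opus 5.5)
The paper gives no proof of this lemma; it is imported from \cite{KwanSudakovTran}. Your argument therefore stands on its own, and it is correct and complete.

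\begin{itemize}
\item \textbf{The swap is measure-preserving.} By your count, a target tail has $r+1$ preimages. Each has probability $\binom{n-k}{r}^{-1}(n-k-r)^{-1}$, and $(r+1)/\bigl((n-k-r)\binom{n-k}{r}\bigr)=\binom{n-k}{r+1}^{-1}$.
\item \textbf{The two-coordinate bound.} The estimate $|f(\eta)-f(\eta')|\le w_k+w_J$ applies \eqref{tran-1} at the intermediate point $\eta-e_k$, which lies off the slice. This is exactly why the hypothesis is imposed on all of $\{0,1\}^n$.
\item \textbf{The increment range.} $X_{k-1}$ is a convex combination of $A_0$ and $A_1$. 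Hence the increment lies in an interval determined by $\eta_1,\dots,\eta_{k-1}$, of length at most $2w_k$.
\end{itemize}

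Hoeffding's lemma then gives $2\exp\bigl(-t^2/(2\sum_i w_i^2)\bigr)$, which is four times better in the exponent than the statement. The constant $8$ is exactly what your increment bound yields with the cruder form of Azuma's inequality. That form uses $|X_k-X_{k-1}|\le c_k$ with $c_k=2w_k$ and gives the tail $2\exp\bigl(-t^2/(2\sum_k c_k^2)\bigr)$. This suggests the cited source uses essentially your coupling.

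One remark in your Step 3 is inaccurate, though harmless: sorting the weights is a convenience, not a necessity.
\begin{itemize}
\item Without sorting, $J$ is uniform on $\{k+1,\dots,n\}$ by exchangeability. So the increment range is at most $w_k+\bar w_{>k}$, where $\bar w_{>k}$ is the average of $w_j$ over $j>k$.
\item The discrete Hardy inequality gives $\sum_k \bar w_{>k}^2\le 4\sum_i w_i^2$, hence $\sum_k (w_k+\bar w_{>k})^2\le 9\sum_i w_i^2$.
\item The range form of Azuma then yields $2\exp\bigl(-2t^2/(9\sum_i w_i^2)\bigr)$. This still implies the stated bound, since $2/9>1/8$.
\end{itemize}
The constant $8$ would be lost only if you combined the unsorted coupling with the absolute-increment form of Azuma.
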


\begin{proof}[Proof of \cref{lem: invert_indiv}]
    Let $q$ be a random vector distributed uniformly on the set of $\{0,1\}^n$ vectors with exactly $d$ ones.  
    Define the function $f: \{0,1\}^n\to \R$ by  $f(x)=\inn{x, v}$ for $x\in \{0,1\}^n$. Then  (\ref{tran-1}) holds with $w_i=|v_i|$.  Let $\mu:=\E f(q)$ and $\sigma^2:=\mbox{Var} f(q)$. 
    Applying \cref{lem: com_con_ineq} and using $v\in \S^{n-1}$, we have
   \begin{equation}\label{eq: invert_indiv}
       \forall t \ge 0\qquad       \P\lr{\abs{f(q)-\mu}\ge t}\le 2\exp\lr{-\frac{t^2}{8}}.
   \end{equation}

    Fix $\delta=\sqrt{8\log 4}$ and consider two cases based on the value of $\mu$. 

\bigskip 

\noindent 
{\it Case 1: $|\mu|\ge 2\delta$.}  By the triangle inequality, we have
\[
|f(q)|\ge |\mu|-|f(q)-\mu|\ge 2\delta-|f(q)-\mu|.
\]
Thus if  $|f(q)-\mu|\le \delta$ then  
$|f(q)|\ge \delta.$
Therefore, using \eqref{eq: invert_indiv}
    \[
    \P\lr{|f(q)|\ge \delta}\ge \P\lr{\abs{f(q)-\mu}\le \delta}\ge 1- 2\exp\lr{-\frac{\delta^2}{8}}= \frac{1}{2}.
    \]
    
\noindent 
{\it Case 2: $|\mu|<2\delta$.}  We first compute the mean and the variance. Note that we have
\begin{equation*}
    \mu=\E\lr{\sum_{i=1}^nq_iv_i}=\sum_{i=1}^nv_i\E(q_i)=\frac{d}{n}\sum_{i=1}^nv_i.
\end{equation*}
The variance is computed using $\mbox{Var} q_i=p(1-p)$ and $\mbox{Cov}(q_i, q_j)=-\frac{p(1-p)}{n-1}$ for $i\neq j$:
\begin{align*}
    \sigma^2&=\sum_{i=1}^n v_i^2\, \mbox{Var} \, q_i+\sum_{i\neq j}v_iv_j \mbox{ Cov}(q_i, q_j)=p(1-p)\sum_{i=1}^nv_i^2-\frac{p(1-p)}{n-1}\sum_{i\neq j}v_iv_j\\
    &=p(1-p)-\frac{p(1-p)}{n-1}\lr{\lr{\sum_{i=1}^n v_i}^2-\sum_{i=1}^n v_i^2}=p(1-p)-\frac{p(1-p)}{n-1}\lr{(\mu/p)^2-1}\\
    &=p(1-p)\lr{1-\frac{\mu^2/p^2-1}{n-1}}.
\end{align*}
Since $|\mu|<2\delta$ and $\delta=\sqrt{8\log 4}$ is a constant, $(\mu/p)^2$ is bounded by a constant depending only on $p$. 
Without loss of generality, we may assume that $n$ is sufficiently large (bounded below by a constant depending only on $p$). Indeed, otherwise the conclusion of the lemma follows by making $c_{\ref{lem: invert_indiv}}$ and $C_{\ref{lem: invert_indiv}}$ small enough. 
Then the term $(\mu^2/p^2-1)/(n-1)$ is bounded above by $1/2$. Therefore,
\[
\sigma^2\ge \frac{1}{2}\, p(1-p).
\]
Then we have
\[
\E \, f(q)^2=\mu^2+\sigma^2\ge \frac{1}{2}\, p(1-p).
\]
Hence, by the Paley-Zygmund inequality,
\[
\P\lr{f(q)^2>\frac{1}{4}\, p(1-p)}\ge
\P\lr{f(q)^2>\frac{1}{2}\E\lr{f(q)^2}}\ge \frac{1}{4}\frac{\lr{\E\, f(q)^2}^2}{\E\, f(q)^4}.
\]
Thus it remains to bound $\E f(q)^4$ from above. 
Applying  \eqref{eq: invert_indiv} and the distribution formula, we observe 
$$\E\, \abs{f(q)-\mu}^4 = \int_0^\infty 4u^3\, \P\lr{|f(q)-\mu|\ge u} \, du\le 128.$$ 
Using  $(a+b)^4\le 16(a^4+b^4)$ for every $a, b\in \R$ and $|\mu|\le 2\delta$,
\[
\E\, f(q)^4=\E\lr{\abs{f(q)-\mu+\mu}^4}\le 16\, \E\lr{f(q)-\mu}^4+16\mu^4\le 2^{11}+256\delta^4.
\]
Let $C_1'=2^{11}+256\delta^4$. Note that $C_1'$ is an absolute constant.
The Paley-Zygmund inequality yields
\[
\P\lr{f(q)^2>\frac{1}{4}\, p(1-p)}\ge \frac{1}{4}\frac{\lr{\E[f(q)^2]}^2}{\E[f(q)^4]}\ge \frac{1}{4}\cdot\frac{\lr{p(1-p)/2}^2}{C_1'}.
\]
Since $1-p\ge 1/2$, 
\[
\P\lr{|f(q)|\ge \sqrt{p/8}}\ge \frac{(p(1-p))^2}{16C_1'} =: c\in (0, 1/2].
\]

\smallskip 

 Thus, combining these two cases and using $\sqrt{p/8}\le 1/4\le \delta$, we obtain
\begin{equation}\label{eq: indi_smallba}
    \P\lr{|f(q)|\ge \sqrt{p/8}}\ge c. 
\end{equation}

Let $q_1, \dots, q_m$ be the independent rows of  $M$, and let $f_i=f(q_i)=\inn{q_i, v}$. Define the indicator variables $\eta_i:=\mathbbm{1}_{\{|f_i|>\sqrt{p/8}\}}$ for $i\in [m]$. From \eqref{eq: indi_smallba}, we have $$\P(\eta_i=1)=\P(|f(q_i)|>\sqrt{p/8})\ge c.$$ Let $G:=\sum_{i=1}^m\eta_i$. Then
\[
\E G =\sum_{i=1}^m \E(\eta_i)\ge cm.
\]
Since $\eta_i$ are independent Bernoulli random variables, we can apply the Chernoff bound (see, e.g., \cite[Theorem 1.1]{dubhashi2009concentration} with $\varepsilon=1/2$), obtaining 
\[
\P\lr{G\le \frac{1}{2}\E G}\le \exp\lr{-\frac{1}{8}\E(G)}.
\]
Since $\E(G)\ge cm \ge cn/2$, we observe
\[
\P\lr{G\le \frac{cn}{4}}\le \P\lr{G\le \frac{1}{2}\, \E G}\le  \exp\lr{-\frac{1}{8}\, \E G}
\le \exp\lr{-\frac{cm}{8}}\le \exp\lr{-\frac{cn}{16}}.
\]
Finally, consider the norm $\|Mv\|_2^2=\sum_{i=1}^m f_i^2$. If the event $\{G>cn/4\}$ occurs, then
\[
\|Mv\|_2^2=\sum_{i=1}^mf_i^2\ge \sum_{i: \eta_i=1}f_i^2\ge \sum_{i: \eta_i=1}\lr{\sqrt{\frac{p}{8}}}^2\ge \frac{cn}{4}\, \frac{p}{8}.
\]
Therefore,
\[
\P\lr{\|Mv\|_2>c_0\sqrt{pn}}\ge \P\lr{G>\frac{cn}{4}}\ge 1-\exp\lr{-\frac{cn}{16}},
\]
where $c_0=\sqrt{c/32}$. This  completes the proof.
\end{proof}

The next lemma provides a bound on the operator norm of the centered random matrix $\|M-\E M\|$. We first note that the 
expectation $\E M$ is the matrix where each entry is $d/n=p$, hence $\|\E M\|=d$, and that the direct calculations show that 
$$\|M-\E M \|=\|M_{|\mathcal{H}}\|,$$
where 
\begin{equation}\label{ortogsubs}
\mathcal{H}=\{x \in \R^n: \sum_{i=1}^{n}x_i=0\}
\end{equation}
(the subspace orthogonal to the  vector $\mathbf{1}_n=(1,1,\ldots,1)$).
For $d=n/2$, Tran \cite[Proposition~2.8]{tran2020smallest} used Bernstein's inequality and a net argument to bound the operator norm $\|M|_{\mathcal{H}}\|$,  finding it to be typically $O(\sqrt{n})$ (see also \cite[Lemma 5.1]{jain2021approximate}). 
This type of result, bounding $\|M-\E M\|$, holds more generally for $p=d/n\in (0, 1/2]$ being a fixed constant as stated below. 
Since the proof follows the same lines, we refer the interested reader to \cite[Proposition~2.8]{tran2020smallest} for the core
argument. Alternatively, one can use Theorem~4.1 in \cite{LLY-circ}, where such a bound is obtained with a worse 
(but enough for our purposes) probability. 

\begin{lemma}\label{lem: operatornrm_Tran}
Let $p\in (0, 1/2]$ be a fixed constant.
     Let $1\le d\le n$ be integers such that $d=pn$. Let $M$ be a random $m\times n$ matrix, $1\le m\le n$, whose rows are independent random $0/1$ vectors having exactly $d$ ones. Then there exist constants $C_{\ref{lem: operatornrm_Tran}}>0$ and $c_{\ref{lem: operatornrm_Tran}}>0$ depending only on $p$ such that 
    for all $t\ge C_{\ref{lem: operatornrm_Tran}}$ one has
    \[
    \P\lr{\|M-\E M\|\ge t\sqrt{pn}}\le 2e^{-c_{\ref{lem: operatornrm_Tran}}t^2n }.
    \]
\end{lemma}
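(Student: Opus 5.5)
The plan is to follow Tran's argument for $d=n/2$ (\cite[Proposition~2.8]{tran2020smallest}) and check that each step depends on $p$ only through constants. The tool is a concentration bound for linear forms combined with a net argument on the subspace $\mathcal{H}$.

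\textbf{Reduction.} First use the identity $\|M-\E M\|=\|M_{|\mathcal{H}}\|$ (valid since every row sums to $d$, so $(M-\E M)\1_n=0$ and $\E M$ vanishes on $\mathcal{H}$). This gives
\[
\|M-\E M\|=\sup_{x\in \S^{n-1}\cap\mathcal{H},\; y\in \S^{m-1}} \langle Mx,y\rangle .
\]
Take $1/4$-nets $\cN_1\subset \S^{n-1}\cap \mathcal{H}$ and $\cN_2\subset \S^{m-1}$ of cardinality at most $9^{n}$ each. The standard approximation argument yields $\|M-\E M\|\le 2\sup_{x\in\cN_1,y\in\cN_2}\langle Mx,y\rangle$, so it suffices to bound $\P(\langle Mx,y\rangle\ge t\sqrt{pn}/2)$ for fixed $x,y$ and take a union bound over $81^n$ pairs.

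\textbf{Fixed $x,y$.} Write $\langle Mx,y\rangle=\sum_{i=1}^m y_i\langle q_i,x\rangle$, where the $q_i$ are the independent rows. Since $x\in\mathcal H$, $\E\langle q_i,x\rangle=p\sum_j x_j=0$. By \cref{lem: com_con_ineq} applied to $f(q)=\langle q,x\rangle$ with $w_j=|x_j|$, each $\langle q_i,x\rangle$ is centered subgaussian with $\P(|\langle q_i,x\rangle|\ge s)\le 2e^{-s^2/8}$, so its $\psi_2$ norm is bounded by an absolute constant $K$. The rows are independent, so the general Hoeffding inequality for sums of independent centered subgaussian variables gives
\[
\P\Bigl(\sum_i y_i\langle q_i,x\rangle\ge s\Bigr)\le \exp\bigl(-cs^2/(K^2\|y\|_2^2)\bigr)=e^{-c's^2}.
\]
Taking $s=t\sqrt{pn}/2$ gives the bound $e^{-c'pt^2n/4}$.

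\textbf{Union bound.} Multiplying by $81^n$ gives $\exp(n\log 81-c'pt^2n/4)$. For $t\ge C_{\ref{lem: operatornrm_Tran}}:=\sqrt{8\log 81/(c'p)}$ this is at most $e^{-c'pt^2n/8}$, which is the claim with $c_{\ref{lem: operatornrm_Tran}}=c'p/8$ (the factor $2$ only helps).

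\textbf{Main obstacle.} The delicate point is the fixed-$(x,y)$ step. Entries within a row are dependent, so Bernstein or Hoeffding cannot be applied entrywise; the dependence has to be absorbed by \cref{lem: com_con_ineq} at the row level. Two things make this work. First, the bound in \cref{lem: com_con_ineq} depends only on $\sum_j w_j^2=\|x\|_2^2=1$ and not on $p$. Second, restricting to $\mathcal H$ kills the mean $\E M$, which is what removes the $d$-sized contribution along $\1_n$. The remaining work is checking that the net on $\S^{n-1}\cap\mathcal H$ behaves as usual, which it does since $\mathcal{H}$ is a subspace.
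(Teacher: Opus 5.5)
Your proof is correct and is essentially the argument the paper defers to (Tran's Proposition~2.8, extended to general $p$): reduce to $\|M_{|\mathcal{H}}\|$, obtain row-level subgaussian concentration of $\langle q_i,x\rangle$ from the Kwan--Sudakov--Tran lemma, apply a Hoeffding/Bernstein-type tail to the sum over the independent rows, and finish with a net and a union bound. The only $p$-dependence comes from rewriting $\sqrt{n}=\sqrt{pn}/\sqrt{p}$, which is exactly why your $C$ and $c$ depend on $p$, as the statement allows.
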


\subsection{Anticoncentration}

Anticoncentration results provide upper bounds on the probability that a random variable falls within a small interval. 
Such results, applied to sums $\sum_{i=1}^n R_i v_i$, where $R_i$ are rows of a random matrix and $v$ is a fixed unit vector, 
are crucial for the study of the smallest singular value. Standard 
techniques often rely on the Least Common Denominator (LCD) for sums of independent random variables, as introduced in 
\cite{rudelson2008littlewood}. Several adjustments of LCD were used for different models of random matrices with independent 
entries 
(see, e.g., \cite{livshyts2021smallestTK, litvak2022singularity, fernandez}). 
However, the components of the row vectors $R_i$ in our model (uniformly sampled from 
$0/1$-vectors with $d$ ones) are not independent. To handle this dependency, Tran \cite{tran2020smallest} introduced the combinatorial Least Common Denominator (CLCD). The CLCD measures, for a given vector $x \in \R^n$, the closeness of the scaled vector 
$$(x_i-x_j)_{1 \leq i < j \leq n} \in \R^k\quad \mbox{ to } \quad \Z^k, \quad \mbox{ where } \quad k={\binom{n}{2}}.$$
The notion of the combinatorial Least Common Denominator was also used in \cite{jain2022smallest} to study the smallest singular value of  adjacency matrices for random regular digraphs.

\begin{defn}\label{def_CLCD}
    For a vector $v\in \R^n$, and parameters $\gamma\in (0,1)$, $\alpha>0$, the combinatorial Least Common Denominator (CLCD) is defined as
    \begin{equation*}
        \mbox{CLCD}_{\alpha,\gamma}(v)=\mbox{LCD}_{\alpha, \gamma}(D(v))=\inf_{\theta>0}
        \left\{\operatorname{dist}(\theta D(v), 
        \Z^k)<\min\left\{\gamma\|\theta D(v)\|_2, \alpha\right\}\right\},
    \end{equation*}
    where $k={\binom{n}{2}}$,  $D(v)$ is the vector in $\R^k$ with coordinates $v_i-v_j$ for $i<j$. 
\end{defn}

The following lemma provides a  lower bound on the CLCD for non-almost constant vectors (see \cite[Lemma 2.15]{tran2020smallest}).
\begin{lemma}\label{lem: clcd_sq_bdd}
    Let $\delta, \rho\in (0,1)$, and let $v\in \S^{n-1}\setminus \mbox{Cons}_{\delta,\rho}$. Then for every $\alpha>0$ and every $\gamma\in (0,\delta\rho/12)$, one has
    \begin{equation*}
        \mbox{CLCD}_{\alpha, \gamma}(v)\ge \sqrt{\delta n}/7.
    \end{equation*}
\end{lemma}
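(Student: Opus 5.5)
The statement to prove is \cref{lem: clcd_sq_bdd}: a lower bound on $\mbox{CLCD}_{\alpha,\gamma}(v)$ for non-almost constant $v$. The approach is a contradiction argument. Suppose there is $\theta<\sqrt{\delta n}/7$ with $\operatorname{dist}(\theta D(v),\Z^k)<\min\{\gamma\|\theta D(v)\|_2,\alpha\}$. We only use the bound $\operatorname{dist}(\theta D(v),\Z^k)<\gamma\|\theta D(v)\|_2$. We will show that this forces $v$ into $\mbox{Cons}_{\delta,\rho}$.

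\textbf{Step 1: size of $D(v)$.} Compute
\[
\|D(v)\|_2^2=\sum_{i<j}(v_i-v_j)^2=n\|v\|_2^2-\Big(\sum_i v_i\Big)^2\le n,
\]
so $\|\theta D(v)\|_2\le\theta\sqrt n$. Writing $\theta(v_i-v_j)=m_{ij}+r_{ij}$ with $m_{ij}\in\Z$ the nearest integer, we get
\[
\sum_{i<j} r_{ij}^2<\gamma^2\theta^2 n.
\]

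\textbf{Step 2: find a good base index.} Averaging over $i$, there is an index $i_0$ with
\[
\sum_{j} r_{i_0 j}^2\le \frac{2}{n}\sum_{i<j}r_{ij}^2<2\gamma^2\theta^2 .
\]
By Chebyshev, at most $\delta n/2$ indices $j$ have $|r_{i_0j}|>2\gamma\theta/\sqrt{\delta n}$. Set $\lambda_0=v_{i_0}$. For every $j$ outside this exceptional set, $\theta(v_j-\lambda_0)$ lies within $2\gamma\theta/\sqrt{\delta n}$ of an integer.

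\textbf{Step 3: the integers are mostly zero.} Since $\theta<\sqrt{\delta n}/7$, an integer $m\ne0$ requires $|v_j-\lambda_0|\gtrsim 1/\theta>7/\sqrt{\delta n}$. Because $\sum_j (v_j-\lambda_0)^2\le 2\sum_j v_j^2+2n\lambda_0^2$, we need some control on $\lambda_0$. It is cleaner to center first. Note that $\|D(v)\|_2^2=n\|v-\bar v\mathbf 1\|_2^2$, and $v-\bar v\mathbf{1}$ has norm at most $1$. Let $u=v-\bar v\mathbf 1$; then $D(u)=D(v)$. Choosing $i_0$ additionally among the (at least half of the) indices with $|u_{i_0}|\le \sqrt{2/n}$ (possible by Markov, adjusting the averaging to keep both properties), we get
\[
\sum_j (u_j-u_{i_0})^2\le 2+4=6 .
\]
Hence the number of $j$ with $|u_j-u_{i_0}|\ge 6/(7\sqrt{\delta n})\cdot$const is at most about $\delta n/2$ when the constant $7$ is used. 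This is where the specific constant $7$ enters, via $\theta|u_j-u_{i_0}|\ge 1/2$ together with $|u_j-u_{i_0}|^2\cdot\#\le 6$ and $\theta^2<\delta n/49$. For all remaining $j$ (at least $(1-\delta)n$ of them), $m_{i_0j}=0$. Therefore
\[
|v_j-v_{i_0}|\le \frac{2\gamma}{\sqrt{\delta n}}<\frac{\rho\sqrt\delta}{6\sqrt n}\le \frac{\rho}{\sqrt n},
\]
using $\gamma<\delta\rho/12$. With $\lambda=v_{i_0}$ this places $v$ in $\mbox{Cons}_{\delta,\rho}$, a contradiction.

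\textbf{Main obstacle.} The delicate step is the bookkeeping of constants in Steps 2--3. We must simultaneously choose $i_0$ with a small residual sum and a centered coordinate of size $O(1/\sqrt n)$, and then split the bad set into two parts of size at most $\delta n/2$ each. The first part consists of large residuals; the second consists of nonzero integer shifts, which are bounded via $\theta<\sqrt{\delta n}/7$. Fitting both within $\delta n$ is what fixes the constants $7$ and $12$. If the stated constants do not close exactly, I would pick $i_0$ by a joint Markov argument, at the cost of factors of $2$, which the margin between $1/12$ and the needed $\sqrt\delta/2$ absorbs.
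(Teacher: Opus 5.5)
Your argument is correct, but it is not a variant of anything in the paper. The paper gives no proof of this lemma and simply imports it from Tran's Lemma~2.15, so your proposal works as a self-contained replacement for a citation.

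The paper's route costs one line but leaves the reader dependent on Tran's constants. Your base-vertex argument buys three things:
\begin{itemize}
\item it directly produces the centre $\lambda=v_{i_0}$ of near-constancy from a near-integer configuration;
\item it uses only the $\gamma$-part of the CLCD condition, so it is uniform in $\alpha$, as the statement requires;
\item it shows the constants $7$ and $12$ carry visible slack.
\end{itemize}

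The simultaneous choice of $i_0$ is not an optional fallback; it must actually be carried out, since Step~3 needs both properties at once. It does work. Write $r_{ji}:=-r_{ij}$, so that $\sum_i\sum_j r_{ij}^2<2\gamma^2\theta^2 n$. Then the sets $A=\{i:\sum_j r_{ij}^2\ge 4\gamma^2\theta^2\}$ and $B=\{i:|u_i|>\sqrt{2/n}\}$ each have fewer than $n/2$ elements, so some $i_0\notin A\cup B$ exists. For this $i_0$, take the Chebyshev threshold $\tau=2\sqrt2\,\gamma\theta/\sqrt{\delta n}$. This choice has three consequences:
\begin{itemize}
\item fewer than $\delta n/2$ indices $j$ have $|r_{i_0j}|>\tau$;
\item $\tau<2\sqrt2\gamma/7<1/2$, so a nonzero integer shift forces $|v_j-v_{i_0}|>1/(2\theta)>7/(2\sqrt{\delta n})$;
\item on indices with zero shift, $|v_j-v_{i_0}|\le 2\sqrt2\gamma/\sqrt{\delta n}<\sqrt{2\delta}\,\rho/(6\sqrt n)<\rho/\sqrt n$.
\end{itemize}

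The garbled count in your Step~3 should be replaced by the following. Since $\sum_j u_j=0$, you have the exact identity $\sum_j(v_j-v_{i_0})^2=\|u\|_2^2+n u_{i_0}^2\le 3$, which is better than your bound of $6$. Hence the good indices with a nonzero shift number fewer than $3\cdot 4\theta^2<12\delta n/49$. The two exceptional sets together contain fewer than $\delta n/2+12\delta n/49<\delta n$ indices. So $\lambda=v_{i_0}$ witnesses $v\in \mbox{Cons}_{\delta,\rho}$, which is the desired contradiction.
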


To quantify anticoncentration,  the L\'evy concentration function of a random variable $X$ is used. 

\begin{defn}
    For a random variable $X$ and $\varepsilon\ge 0$, the L\'evy concentration of $X$ of width $\varepsilon$ is 
    \begin{equation*}
        \Ls(X,\varepsilon)=\sup_{x\in \R} \P(|X-x|<\varepsilon).
    \end{equation*}
\end{defn}

The connection between the CLCD and anticoncentration is established in the following key result which is analogous to
standard properties of the LCD from \cite{rudelson2008littlewood}. Its proof is an adaptation of the proof of 
Theorems~1.5 and 3.2 in \cite{tran2020smallest}. We skip the details.

\begin{lemma}\label{lem:small_ball}
    Fix $b>0$ and $\gamma\in (0,1)$. Let $d\leq n$ be positive integers. 
    Let $v\in \R^{n}$ satisfy $\|D(v)\|_2\ge b\sqrt{n}$. Suppose that $\eta$ is a random vector uniformly distributed over $n$-dimensional $0/1$ vectors having  exactly $d$ ones.  Then for every $\alpha>0$ and $\varepsilon\ge 0$,
    \begin{equation*}
        \Ls\lr{\sum_{i=1}^n v_i\eta_i, \varepsilon\sqrt{\frac{d}{n}}}\le \frac{\varepsilon}{\gamma b}+\frac{1}{\gamma b}\sqrt{\frac{n}{d}}\cdot \frac{1}{\mbox{CLCD}_{\alpha,\gamma}(v)}+2e^{-4\alpha^2d/n^2}.
    \end{equation*}
\end{lemma}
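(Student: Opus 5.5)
The plan is to reduce the dependent $0/1$ vector $\eta$ to a sum of independent symmetric signs by a random pairing, and then run the Esseen-inequality argument of \cite{rudelson2008littlewood} with the CLCD in place of the LCD. Let $m:=\min(d,n-d)$, which is at least a constant multiple of $d$ since $p\le 1/2$. First I will realise $\eta$ as follows. Choose a uniformly random ordered sequence of $2m$ distinct indices $(i_1,j_1,\dots,i_m,j_m)$ and a uniformly random placement of the remaining $d-m$ ones among the other $n-2m$ coordinates. Then, independently, for each pair $k$ toss a fair sign $\xi_k\in\{\pm1\}$ deciding whether the one sits at $i_k$ or at $j_k$. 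One checks by symmetry that the resulting vector is uniform on $0/1$ vectors with exactly $d$ ones. Conditionally on the pairing $\pi$,
\[
\sum_{i=1}^n v_i\eta_i = Z_\pi+\frac12\sum_{k=1}^m \xi_k\,(v_{i_k}-v_{j_k}),
\]
where $Z_\pi$ is determined by $\pi$. Since the L\'evy concentration function is invariant under shifts and can be averaged over the conditioning, it suffices to bound $\E_\pi \Ls\bigl(\sum_k \xi_k w_k,\,2\varepsilon\sqrt{d/n}\bigr)$ with $w_k=v_{i_k}-v_{j_k}$.

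Next I will apply Esseen's inequality to the Rademacher sum. Together with the standard bound $|\cos(\pi t)|\le\exp(-c\,\operatorname{dist}(t,\Z)^2)$, this gives
\[
\Ls\Bigl(\sum_k\xi_kw_k,\ \varepsilon\Bigr)\le C\int_{-1}^{1}\exp\Bigl(-c\sum_{k=1}^m\operatorname{dist}\bigl(\theta w_k/\varepsilon,\Z\bigr)^2\Bigr)\,d\theta .
\]
Taking $\E_\pi$ inside the integral, I need to compare $\sum_k\operatorname{dist}(\theta w_k,\Z)^2$ with its mean. Each pair $\{i_k,j_k\}$ is marginally uniform among the $\binom n2$ pairs, so the mean is $\frac{m}{\binom n2}\operatorname{dist}(\theta D(v),\Z^k)^2\asymp \frac{d}{n^2}\operatorname{dist}(\theta D(v),\Z^k)^2$. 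This is where the $\sqrt{d}/n$ scaling and the factor $\sqrt{n/d}$ in the statement come from.

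The \textbf{main obstacle} is exactly this step: controlling $\E_\pi\exp(-c\sum_k\operatorname{dist}(\theta w_k,\Z)^2)$ by the exponential of minus a constant times its mean, even though the pairs are drawn without replacement. My first attempt is to view the sum as a function of the random sequence of indices that changes by at most $1/4$ per swapped pair, and to apply a bounded-difference concentration bound in the spirit of \cref{lem: com_con_ineq} (a martingale or permutation version). This should show that the sum is at least half its mean except with probability $\exp(-c\cdot\text{mean})$, which is harmless after integration. Alternatively, one can split the $m$ pairs into blocks and compare with sampling with replacement, using a Hoeffding-type comparison.

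Finally I will split the $\theta$-range at the CLCD, in the usual way. For $\theta\sqrt{d}/n$ below $\mathrm{CLCD}_{\alpha,\gamma}(v)$, the definition gives either $\operatorname{dist}\ge\gamma\|\theta D(v)\|_2\ge\gamma\theta b\sqrt n$ or $\operatorname{dist}\ge\alpha$. The first alternative yields Gaussian decay in $\theta$, and integrating produces the terms $\varepsilon/(\gamma b)$ and $\frac{1}{\gamma b}\sqrt{n/d}\,/\mathrm{CLCD}_{\alpha,\gamma}(v)$, the latter from truncating the integral at the CLCD scale. The second alternative contributes at most $\exp(-c\alpha^2 d/n^2)$, which after fixing constants is the term $2e^{-4\alpha^2d/n^2}$. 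The bookkeeping of normalisations (the $\varepsilon\sqrt{d/n}$ width and the factor $1/2$ in $w_k$) is routine and follows Tran's Theorems~1.5 and~3.2.
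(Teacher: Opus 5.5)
Your architecture is sound: random pairing, a conditional Rademacher sum, Esseen's inequality, and truncating the frequency range at the CLCD. The pairing construction and the computation of the mean are also correct. The paper gives no proof of its own, only a pointer to Tran's Theorems~1.5 and~3.2, so everything rests on the averaging step you flag. The tool you propose for that step does not deliver what you claim.

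For a fixed frequency $t$, put $f(a,b)=\operatorname{dist}(t(v_a-v_b),\Z)^2\in[0,1/4]$ and $X=\sum_{k\le m}f(i_k,j_k)$. Its mean is $\mu(t)=\frac{m}{\binom n2}\operatorname{dist}(tD(v),\Z^{\binom n2})^2\le m/4$. A bounded-difference inequality (McDiarmid/Azuma, for permutations or on the slice as in Lemma~\ref{lem: com_con_ineq}) has variance proxy of order $m$ however small $\mu$ is. So it gives only $\P_\pi(X\le\mu/2)\le e^{-c\mu^2/m}$, not $e^{-c\mu}$.

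This fails exactly where it matters. The term $\varepsilon/(\gamma b)$ comes from the frequencies with $\mu(t)=O(1)$, and there $e^{-c\mu^2/m}=1-O(1/d)$ is vacuous. Carrying it through with $\mu(t)\gtrsim\gamma^2b^2t^2d/n$ below the CLCD gives $\varepsilon\sqrt{d/n}\int_{\R}e^{-c\gamma^4b^4t^4d/n^2}\,dt\asymp\varepsilon d^{1/4}/(\gamma b)$ instead of $\varepsilon/(\gamma b)$. It likewise gives $e^{-c\alpha^4d/n^4}$ instead of $e^{-c\alpha^2d/n^2}$. A $d^{1/4}$ loss in the $\varepsilon$-term is fatal for Theorem~\ref{thm:upper_bdd_least}.

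Your fallback does not apply as stated either. The pairs of a random matching are not a sample without replacement from the $\binom n2$ pairs, since they are forced to be disjoint. Hoeffding's comparison theorem only concerns sums of one fixed function over such a sample.

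What you need is the variance-sensitive bound $\E_\pi e^{-cX}\le Ce^{-c'\mu}$, which exploits that the summands lie in $[0,1/4]$. One elementary way to get it:
\begin{enumerate}
\item Expose the pairs one at a time. Given the first $k-1$ pairs, $\{i_k,j_k\}$ is uniform among pairs inside the set $R_k$ of unused indices. Hence $\E[e^{-cf(i_k,j_k)}\mid\mathcal F_{k-1}]\le 1-c_1\bar f(R_k)\le e^{-c_1\bar f(R_k)}$, where $\bar f(R)$ is the average of $f$ over pairs in $R$.
\item With $A=\sum_{k\le m}\bar f(R_k)$, this gives $\E\exp(-cX+c_1A)\le1$. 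By Cauchy--Schwarz, $\E e^{-cX/2}\le(\E e^{-c_1A})^{1/2}$.
\item Let $W=\sum_{a<b}f(a,b)$, $\deg_f(a)=\sum_b f(a,b)\le n/4$, and $k_0=\min(m,\lfloor n/8\rfloor)$. Let $V$ be the uniformly random set of $2k_0$ indices used by the first $k_0$ pairs. For $k\le k_0$, every pair not inside $R_k$ meets $V$, so $\bar f(R_k)\ge\bigl(W-\sum_{a\in V}\deg_f(a)\bigr)/\binom n2$.
\item Since $\E\sum_{a\in V}\deg_f(a)\le W/2$, Bernstein's inequality for sampling without replacement applies here, because this is one function summed over a simple random sample. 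It gives $\sum_{a\in V}\deg_f(a)\le 3W/4$ with probability at least $1-e^{-cW/n}$. On that event, $A\ge k_0W/(4\binom n2)\gtrsim\mu$.
\item Since $\mu\le W/(n-1)$, this yields $\E e^{-c_1A}\le 2e^{-c\mu}$, which is exactly the input your integration step needs.
\end{enumerate}

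With this lemma in place the rest of your outline goes through, up to absolute constant factors (which is how the lemma is used later). One bookkeeping correction: the CLCD truncation is on the frequency $t$ at which $\operatorname{dist}(tD(v),\Z^{\binom n2})$ is evaluated, not on $\theta\sqrt d/n$.
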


\begin{remark}\label{rem:clcd_pcont}
    We apply \cref{lem:small_ball} to non-almost constant vectors $v\in \S^{n-1}$. 
    Note that for  $v\in \S^{n-1}\setminus \mbox{Cons}_{\delta, \rho}$, it follows 
    from \cite[Lemma~2.2]{litvak2019smallest} (cf., \cite[Lemma 2.2]{tran2020smallest}) 
    that 
    \[
    \|D(v)\|_2\ge \frac{\delta \rho\sqrt{n} }{4\sqrt{2}}.
    \]
  Setting $\alpha=\mu n$ in CLCD, choosing  $b=\delta \rho /4\sqrt{2}$ in \cref{lem:small_ball}, 
  and using \cref{lem: clcd_sq_bdd},   we obtain for  $0<\gamma <\delta \rho /12$
  \[
   \Ls\lr{\sum_{i=1}^n v_i\eta_i, \varepsilon\sqrt{\frac{d}{n}}}\le \frac{4\sqrt{2}\varepsilon}{\gamma \delta\rho}
   +\frac{28\sqrt{2}}{\gamma \delta^{3/2}\rho \sqrt{d}}+2e^{-4\mu^2d}.
  \]
  
\end{remark}

\section{Invertibility on almost constant vectors}\label{sec: inv_almost}

In this section, we focus on the invertibility of a random matrix $M$ on the set of almost constant vectors. The main result in this section establishes that with high probability $\|Mv\|_2$ is bounded away from zero on the set $\mbox{Cons}_{\delta, \rho}$.

\begin{prop}\label{prop: inver_cons_pcons}
Let $p \in(0, 1/2]$ be a fixed constant.
    Let $d, n$ be large enough positive integers such that $d=p n$. 
     There exist constants $C_{\ref{prop: inver_cons_pcons}}, c_{\ref{prop: inver_cons_pcons}}, \delta, \rho\in (0, 1)$ depending on $p$ only such that the following holds. Let $M$ be a random $m\times n$ matrix, $n/2\le m\le n$, whose rows are independent random  vectors drawn uniformly from the set of $n$-dimensional $0/1$ vectors having exactly $d$ ones. Then 
    \[
    \P\lr{\inf_{v\in \mbox{Cons}_{\delta, \rho} }\|Mv\|_2\le c_{\ref{prop: inver_cons_pcons}}\sqrt{pn}
    }\le e^{-C_{\ref{prop: inver_cons_pcons}}n}.
    \]
\end{prop}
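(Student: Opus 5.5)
We argue by a net argument on $\mbox{Cons}_{\delta,\rho}$, combining the individual bound of \cref{lem: invert_indiv} with the operator norm bound of \cref{lem: operatornrm_Tran}. The key observation is that almost constant vectors admit a net of size $e^{\kappa n}$ with $\kappa\to 0$ as $\delta,\rho\to 0$. Every $v\in\mbox{Cons}_{\delta,\rho}$ can be written as $v=\lambda\1_n+ y+z$. Here $\lambda\1_n$ is the constant part with $|\lambda|\le 2/\sqrt n$, say (if $|\lambda|$ were much larger, $\|v\|_2$ would exceed $1$ since at least $(1-\delta)n$ coordinates are near $\lambda$). The vector $y$ is supported on a set $S$ with $|S|\le\delta n$ and $\|y\|_2\le 3$. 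Finally $\|z\|_\infty\le\rho/\sqrt n$, so $\|z\|_2\le\rho$.

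\textbf{Step 1 (the net).} We build an $O(\rho)$-net $\cN$ of $\mbox{Cons}_{\delta,\rho}$, in the Euclidean norm, as follows:
\begin{itemize}
\item discretize $\lambda$ on a grid of mesh $\rho/\sqrt n$ in $[-2/\sqrt n,2/\sqrt n]$, which takes $O(1/\rho)$ points;
\item choose $S$, in at most $\binom{n}{\delta n}\le (e/\delta)^{\delta n}$ ways;
\item take a $\rho$-net of the ball of radius $3$ in $\R^S$, of size $(9/\rho)^{\delta n}$;
\item drop $z$ entirely, paying an error $\rho$.
\end{itemize}
After normalizing to the sphere, this gives $|\cN|\le (C/\rho)\exp\big(\delta n\log(Ce/(\delta\rho))\big)$, and every $v\in\mbox{Cons}_{\delta,\rho}$ has some $u\in\cN$ with $\|v-u\|_2\le C\rho$.

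\textbf{Step 2 (approximation).} We must control $\|M(v-u)\|_2$. Here the difference is not small in the $\1_n$ direction relative to $\|M\|\approx d$, so we separate the $\1_n$ component. Write $v-u=\beta\1_n/\sqrt n + h$ with $h\in\mathcal H$. Then
\[
\|M(v-u)\|_2\le |\beta|\, d\,\frac{\sqrt m}{\sqrt n}\cdot\frac{1}{\sqrt{1}}+\|M_{|\mathcal H}\|\,\|h\|_2 .
\]
Both terms are handled by making the net finer in the $\1_n$ direction. The grid for $\lambda$ already has mesh $\rho/\sqrt n$; we refine it to mesh $\rho/(\sqrt{n}\,\sqrt d)$, which costs only a polynomial factor $O(\sqrt d/\rho)$ in $|\cN|$. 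The remaining components $y$ and $z$ may still carry mean; to fix this we net the projection $P_{\1}v$ directly and with this fine mesh, and the cardinality stays $e^{o(n)}\cdot(\ldots)^{\delta n}$. Then $|\beta|\lesssim \rho/\sqrt d$, so the first term is $\lesssim\rho\sqrt d$. On the event of \cref{lem: operatornrm_Tran} with $t=C_{\ref{lem: operatornrm_Tran}}$, which has probability $1-2e^{-cn}$, we have $\|M_{|\mathcal H}\|=\|M-\E M\|\le C\sqrt{pn}$, so the second term is $\le C\rho\sqrt{pn}$. Altogether $\|M(v-u)\|_2\le C'\rho\sqrt{pn}$.

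\textbf{Step 3 (union bound and choice of parameters).} By \cref{lem: invert_indiv} and a union bound, with probability at least $1-|\cN|e^{-C_{\ref{lem: invert_indiv}}n}$ every $u\in\cN$ satisfies $\|Mu\|_2>c_{\ref{lem: invert_indiv}}\sqrt{pn}$. We first fix $\rho$ with $C'\rho\le c_{\ref{lem: invert_indiv}}/2$. We then choose $\delta$ small enough that $\delta\log(Ce/(\delta\rho))\le C_{\ref{lem: invert_indiv}}/2$. This yields $\inf_{\mathrm{Cons}}\|Mv\|_2\ge \frac{c_{\ref{lem: invert_indiv}}}{2}\sqrt{pn}$ outside an event of probability $e^{-C_{\ref{lem: invert_indiv}}n/2}+2e^{-cn}$, which is at most $e^{-C_{\ref{prop: inver_cons_pcons}}n}$ for $n$ large.

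\textbf{Main obstacle.} The delicate point is Step 2. The large eigen-direction $\1_n$, with $\|M\1_n\|=d\sqrt n$, means that a naive $\rho$-net combined with the bound $\|M\|\le 2d$ fails. The approximation must therefore be split between $\mathrm{span}(\1_n)$, netted at the finer scale, and $\mathcal H$, controlled by \cref{lem: operatornrm_Tran}. One must also check that this refinement adds only a subexponential factor to $|\cN|$.
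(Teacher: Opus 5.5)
Your proposal is correct and follows the same route as the paper. You work on the event $\|M_{|\mathcal H}\|=\|M-\E M\|\le C\sqrt{pn}$ from \cref{lem: operatornrm_Tran}, approximate $\mbox{Cons}_{\delta,\rho}$ by a net of cardinality $e^{\kappa n}$ (with $\kappa\to 0$) whose error is controlled separately in the $\1_n$ direction and in $\mathcal H$, and conclude with \cref{lem: invert_indiv} and a union bound. The only difference is that you build this net by hand, combining a grid of mesh $\rho/\sqrt d$ for $\langle v,\1_n\rangle/\sqrt n$ with the $\mathcal H$-projection of a coarse $\rho$-net. The paper instead quotes Tran's \cref{lem:net_cons_tran}, whose bound $(\delta+2\rho)(2\|A_{|\mathcal H}\|+\|A\|/n)$ packages exactly this splitting.

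One caution: do not renormalize the refined net points to the sphere, since that can reintroduce an error of order $\rho$ in the $\1_n$ direction. Instead apply \cref{lem: invert_indiv} to $u/\|u\|_2$ and use $\|u\|_2\ge 1/2$, at the cost of a factor $2$ in the threshold.
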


To establish \cref{prop: inver_cons_pcons}, we use a net construction for almost constant vectors with respect to the 
pseudometric $d(x, y):=\|M(x-y)\|_2$, as provided by \cite[Lemma~2.12]{tran2020smallest}. Recall that $\mathcal{H}$ was 
introduced in (\ref{ortogsubs}).

\begin{lemma}\cite[Lemma 2.12]{tran2020smallest}\label{lem:net_cons_tran}
    Let $\delta, \rho \in (0, 1/12)$ and $n$ be sufficiently large with respect to $\delta$ and $\rho$. Then there is a net $\cN\subset \S^{n-1}$ of cardinality at most $\exp(2n\delta \log(5/\delta))$ such that for any $v\in \mbox{Cons}_{\delta, \rho}$ there is $w\in \cN$ so that for any deterministic $m\times n$ matrix $A$ we have
 \[ \|A(v-w)\|_2\le (\delta+2\rho)\lr{2\|A_{|\mathcal{H}}\|+\frac{\|A\|}{n}}. \]
\end{lemma}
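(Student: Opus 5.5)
The statement to prove is Lemma~\ref{lem:net_cons_tran}, the net lemma for almost constant vectors. The plan is to decompose each $v\in \mbox{Cons}_{\delta,\rho}$ as a constant part plus a sparse part plus a small remainder, and to discretize only the constant and sparse parts.

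\textbf{Decomposition.} Fix $v\in\mbox{Cons}_{\delta,\rho}$ with its $\lambda$, and let $I$, $|I|\ge(1-\delta)n$, be the set of good coordinates. Since $\|v\|_2=1$ and most coordinates are within $\rho/\sqrt n$ of $\lambda$, we get $|\lambda|\le 2/\sqrt n$. Write
\[
v=\lambda\mathbf 1_n+ y + z ,
\]
where $y$ is supported on $I^c$ (so $|\operatorname{supp} y|\le\delta n$, and $\|y\|_2\le 3$) and $z=(v-\lambda\mathbf 1_n)\mathbf 1_I$ satisfies $\|z\|_\infty\le\rho/\sqrt n$, hence $\|z\|_2\le\rho$.

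\textbf{Net.} The net consists of the points $\lambda'\mathbf 1_n+y'$, later normalized or projected appropriately. Here $\lambda'$ runs over a grid of step $\sim\rho/(n\sqrt n)$ (or $\delta/\sqrt n$) in $[-2/\sqrt n,2/\sqrt n]$. The vector $y'$ runs over a $\delta$-net of the vectors of norm at most $3$ supported on some set $J$ with $|J|\le\delta n$. The count is $\binom{n}{\delta n}(C/\delta)^{\delta n}\cdot\mathrm{poly}(n)\le \exp(2\delta n\log(5/\delta))$ for large $n$.

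\textbf{Splitting the error.} For a fixed matrix $A$, split every error vector $u$ as $u=\bar u\,\mathbf 1_n+(u-\bar u\,\mathbf 1_n)$, with the second piece in $\mathcal H$. The piece in $\mathcal H$ costs at most $\|A_{|\mathcal H}\|$ times its norm, and the constant piece costs $|\bar u|\sqrt n\,\|A\|$. The sparse error $y-y'$ has norm at most $\delta$. The remainder $z$ has norm at most $\rho$; its mean satisfies $|\bar z|\le\rho/\sqrt n$, so its projection onto $\mathcal H$ has norm at most $2\rho$.

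\textbf{Main obstacle: the constant direction.} The $\mathbf 1_n$-component of $z$ and of $y-y'$ would contribute $\|A\|\cdot O(\rho)$, which is far too big compared with the target $\|A\|/n$. The fix is to absorb these means into $\lambda$. Choose $\lambda'$ approximating $\lambda+\bar y+\bar z$, i.e.\ the true mean $\bar v$ of $v$, to precision $(\delta+\rho)n^{-3/2}$. Then write $v-w=(\bar v-\lambda')\mathbf 1_n+P_{\mathcal H}(v-w)$.
\begin{itemize}
\item The first term contributes at most $(\delta+\rho)\|A\|/n$.
\item The second term has norm at most $\|P_{\mathcal H}(y-y')\|+\|P_{\mathcal H}z\|\le 2(\delta+2\rho)$, giving $2(\delta+2\rho)\|A_{|\mathcal H}\|$.
\end{itemize}
A grid of step $n^{-3/2}$ on an interval of length $4/\sqrt n$ has only $O(n)$ points, which is harmless for the cardinality count.

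\textbf{Landing on the sphere.} Finally, the net points must lie on $\S^{n-1}$. Pick $w\in\S^{n-1}$ within the same cell of the net (choosing one sphere point per nonempty cell) and use the triangle inequality. This doubles the constants, which can be absorbed by running the construction at scale $\delta/2$, $\rho/2$.
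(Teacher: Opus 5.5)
The paper does not prove this lemma; it quotes it from Tran. Your outline follows the standard argument for nets of this kind, and the strategy is correct. You discretize the sparse part on at most $\delta n$ coordinates, split each error into its $\mathbf 1_n$- and $\mathcal{H}$-components, and match the mean of $v$ to precision $n^{-3/2}$, so that the $\mathbf 1_n$-component costs only $\|A\|/n$. Choosing one representative per nonempty cell is also the right way to land on $\S^{n-1}$; plain normalization would in general shift the mean by much more than $n^{-3/2}$. Two bookkeeping steps need repair.

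\textbf{The mean of the net points.} If $w=\lambda'\mathbf 1_n+y'$, the mean of $w$ is $\lambda'+\bar{y'}$, not $\lambda'$. So the identity $v-w=(\bar v-\lambda')\mathbf 1_n+P_{\mathcal H}(v-w)$ is false as written. Take the pre-net points to be $w_0=\lambda'\mathbf 1_n+P_{\mathcal H}y'$, or let $\lambda'$ approximate $\bar v-\bar{y'}$ instead of $\bar v$.

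\textbf{The last step.} As written it fails. The parameter $\rho$ is fixed by the set $\mbox{Cons}_{\delta,\rho}$ you must cover, and the remainder $z$ is determined by $v$. So there is nothing to ``run at scale $\rho/2$.'' Moreover, with your bound $2(\delta+2\rho)$ on the $\mathcal H$-part, doubling gives twice the allowed coefficient.

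No rescaling is needed: the factor $2$ in front of $\|A_{|\mathcal H}\|$ in the target is exactly the room for the doubling.
\begin{itemize}
\item Since $P_{\mathcal H}$ is a contraction, $\|P_{\mathcal H}(v-w_0)\|_2\le\|y-y'\|_2+\|z\|_2\le\delta+\rho$. Even your cruder bound $\delta+2\rho$ would suffice.
\item A $\lambda'$-grid of step $\delta n^{-3/2}$ gives $|\bar v-\lambda'|\le\frac{\delta}{2}n^{-3/2}$, at the price of only $O(n/\delta)$ points.
\end{itemize}
Let $w\in\mbox{Cons}_{\delta,\rho}$ be the chosen representative of the cell of $w_0$, that is, of the set of vectors assigned to $w_0$. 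Then both $v$ and $w$ satisfy these two bounds relative to $w_0$, whence
\[
\|A(v-w)\|_2\le 2(\delta+\rho)\|A_{|\mathcal H}\|+\delta\,\frac{\|A\|}{n}\le(\delta+2\rho)\Bigl(2\|A_{|\mathcal H}\|+\frac{\|A\|}{n}\Bigr).
\]
With your radius-$3$ ball and precision $\delta$, the cardinality is at most
\[
\binom{n}{\lfloor\delta n\rfloor}\,(1+6/\delta)^{\delta n}\,O(n/\delta)\le(16.6/\delta^2)^{\delta n}\,O(n/\delta).
\]
This is at most $(25/\delta^2)^{\delta n}=\exp(2n\delta\log(5/\delta))$ once $n$ is large with respect to $\delta$, so the stated bound holds without any change of scale.
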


\begin{proof}[Proof of \cref{prop: inver_cons_pcons}]
Let $C_{\ref{lem: invert_indiv}}, c_{\ref{lem: invert_indiv}}$ be the constants from \cref{lem: invert_indiv}, and $C_{\ref{lem: operatornrm_Tran}}, c_{\ref{lem: operatornrm_Tran}}$ be the constants from \cref{lem: operatornrm_Tran}. We choose $\delta, \rho \in (0, 1/12)$  sufficiently small to satisfy conditions specified later.
Our goal is to estimate the probability of the event 
   \[
   E:=\set{\inf_{v\in \mbox{Cons}_{\delta, \rho} }\|Mv\|_2\le c_{\ref{lem: invert_indiv}}\sqrt{pn}}.
   \]
Consider the following event 
\[
\cE_{\ref{lem: operatornrm_Tran}}:=\set{M\in \cM_{n,d}: \|M-\E M\|<C_{\ref{lem: operatornrm_Tran}}\sqrt{pn}}.
\]
By  \cref{lem: operatornrm_Tran}, 
\begin{equation*}
    \P(\cE_{\ref{lem: operatornrm_Tran}}^c)\le2e^{-Cn }
\end{equation*}
for the  constant $C=c_{\ref{lem: operatornrm_Tran}}C_{\ref{lem: operatornrm_Tran}}^2>0$, 
depending only on $p$.  
We split the event  $E$ based on whether $\cE_{\ref{lem: operatornrm_Tran}}$ occurs, 
as follows,
    \[E_1:=\set{\inf_{v\in \mbox{Cons}_{\delta, \rho} }\|Mv\|_2\le c_{\ref{lem: invert_indiv}}\sqrt{pn}\quad \mbox{ and } \quad \|M-\E M\|\ge C_{\ref{lem: operatornrm_Tran}}\sqrt{pn}  } \]
    and 
    \[
    E_2:=\set{\inf_{v\in \mbox{Cons}_{\delta, \rho} }\|Mv\|_2\le   c_{\ref{lem: invert_indiv}}\sqrt{pn}\quad  \mbox{ and }\quad  \|M-\E M\|< C_{\ref{lem: operatornrm_Tran}}\sqrt{pn} }.
    \]
    Then $E=E_1\cup E_2$, and
\[
\P(E)\le\P(E_1)+\P(E_2)\le \P(\cE_{\ref{lem: operatornrm_Tran}}^c)+\P(E_2)\le  \P(E_2)+2e^{-Cn }.
\]

Next  we bound $\P(E_2)$. Note, $\|\E M\|=pn$. Suppose $E_2$ occurs, then 
$$\|M-\E M\|< C_{\ref{lem: operatornrm_Tran}}\sqrt{pn}$$ 
(in particular, for large enough $n$, $\|M\|\leq n$) and there exists a vector $v\in \mbox{Cons}_{\delta, \rho}$ such that
\[
\|Mv\|_2\le c_{\ref{lem: invert_indiv}}\sqrt{pn}.
\]
Let $\cN$ be the net from \cref{lem:net_cons_tran} of cardinality at most $\exp(2n\delta \log(5/\delta))$. 
Choose $w\in \cN$ such that
\[
\|M(v-w)\|_2\le (\delta+2\rho)\lr{2\|M-\E M\|+\frac{\|M\|}{n}}\le (\delta+2\rho)(2C_{\ref{lem: operatornrm_Tran}}\sqrt{pn}+1).
\]
By the triangle inequality we observe
\begin{align*}
    \|Mw\|_2&\le \|Mv\|_2+\|M(v-w)\|_2\le c_{\ref{lem: invert_indiv}}\sqrt{pn}+(\delta+2\rho)(2C_{\ref{lem: operatornrm_Tran}}\sqrt{pn}+1)\\
    &\le c_{\ref{lem: invert_indiv}}\sqrt{pn}+(9\max\{\delta, \rho \} ) C_{\ref{lem: operatornrm_Tran}}\sqrt{pn}\le 2c_{\ref{lem: invert_indiv}}\sqrt{pn},
\end{align*}
provided that  
\begin{equation}\label{delrho}
\max\{\delta, \rho \}\le c_{\ref{lem: invert_indiv}}/(9C_{\ref{lem: operatornrm_Tran}})
\end{equation}
and $d$ is large enough. 
Thus 
$$
 E_2 \subset \left\{ \inf_{w\in \cN}\|Mw\|_2 \le 2c_{\ref{lem: invert_indiv}} \sqrt{pn} \right\}.
$$
By \cref{lem: invert_indiv}, for any fixed $w\in \cN$ we have
\[
 \P\lr{\|Mw\|_2\le 2c_{\ref{lem: invert_indiv}}\sqrt{pn}}\le e^{-C_{\ref{lem: invert_indiv}}n}.
\]
Applying the union bound over the net $\cN$, 
\[
   \P(E_2)\le \P\lr{\inf_{w\in \cN}\|Mw\|_2\le 2c_{\ref{lem: invert_indiv}}\sqrt{pn}}
   \le e^{2n\delta \log(5/\delta)} e^{-C_{\ref{lem: invert_indiv}}n/4} \le e^{-C_1n},
\]
where we assume that $\delta$  satisfies 
\begin{equation}\label{delta}
    2\delta \log(5/\delta)\le C_{\ref{lem: invert_indiv}}/8
\end{equation}
and $C_1=C_{\ref{lem: invert_indiv}}/8$. 
Thus, choosing $\delta$ and $\rho$ to be small enough to satisfy (\ref{delrho}) 
and (\ref{delta}), we obtain  the desired result. 
\end{proof}

\section{Proof of \cref{thm:upper_bdd_least}}\label{sec: proof_main}

We follow the general scheme introduced in \cite{rudelson2008least} 
(see also \cite{tatarko2018upper, tao2012topics}). 
It is based on the following simple implication:  for an invertible matrix $M$ and any $u, v>0$,
$$\exists x\in \R^n:\, \|x\|_2\le u,\,  \|(M^T)^{-1}x\|_2 \ge v \, \quad \mbox{ implies } \, \quad s_n(M)\le u /v.$$
Indeed, recall that $s_n(M)=1/\|M^{-1}\|=1/\|(M^T)^{-1}\|$, hence the existence of such $x$ means 
\[
1/s_n(M)= \|(M^T)^{-1}\|\ge \|(M^T)^{-1}x\|_2/\|x\|_2\ge v/u.
\]
Note that the condition $d =pn$ for a fixed constant $p \in (0,1/2]$ ensures that $M$ is invertible with high probability. For simplicity, we will  assume that $M$ is invertible, with the understanding that this occurs on an event of probability at least $1-2e^{-c_p n}$ (see, e.g., \cite{tran2020smallest})

Let $R_i$ denote the $i$-th row of $M$ and $X_i=R_i^T$, which we consider as  the column vector in $\R^n$. Let $H_1:=\mbox{span}\{X_2,\dots, X_n\}$. Let $P_1: \R^n\to H_1$ be the orthogonal projection onto $H_1$. Let $$x:=X_1-P_1X_1.$$
Roughly speaking, this vector $x$ quantifies the degree to which the first row $X_1$ is linearly independent from the remaining rows $X_2,\dots ,X_n$.

Note that $x$ is orthogonal to $H_1$. Since $M$ is invertible, $H_1$ has dimension $n-1$, then its orthogonal complement $H_1^\perp$ has dimension $1$. Let $f_1$ be a unit vector spanning $H_1^\perp$. Note $f_1=x/\|x\|_2$ (up to the sign). Then we have $$\|x\|_2=|\langle X_1, f_1\rangle|=\mbox{dist}(X_1, H_1).$$

\subsection{Bounding $\|x\|_2$}\label{sec: bound_x}
We first bound $\|x\|_2$ from above.
By Chebyshev's inequality, for any $u>0$
\begin{equation}\label{chebmark}
    \P(\|x\|_2\ge u)\le \frac{\E\|x\|_2^2}{u^2}.
\end{equation}
To apply this, we compute $\E\|x\|_2^2$ by conditioning on the sigma field generated by $H_1$. Denoting  the $k$-th coordinates  of $X_1$ and $f_1$ by $X_1(k)$ and $f_1(k)$ respectively, we  observe
\begin{align*}
       \E(\|x\|_2^2\, |\, H_1)&=\E(\langle  X_1, f_1\rangle^2\, |\, H_1)=\E\lr{\lr{\sum_{k=1}^n X_{1}(k)f_1(k)}^2\, |\, H_1}\\
    &=\sum_{k=1}^n\E(X_{1}(k)^2\, |\, H_1)f_1(k)^2+\sum_{i\neq j}\E(X_{1}(i)X_{1}(j)\, |\, H_1)f_1(i)f_1(j)\\
    &=\sum_{k=1}^n\E(X_{1}(k)^2)f_1(k)^2+\sum_{i\neq j}\E(X_{1}(i)X_{1}(j))f_1(i)f_1(j),
\end{align*}
where the last line follows from the fact that $X_1$ is independent of $H_1$ (spanned by  $X_2,\dots, X_n$).
Note that 
$$
 \E X_{1}(k)^2=\E X_{1}(k)=\P(X_{1}(k)=1)=d/n
$$
and  for $i\neq j$,
$$
 \E X_{1}(i)X_{1}(j)=\P(X_{1}(i)=1 \,\,\, \mbox{ and } \,\,\, X_{1}(j)=1)=\binom{n-2}{d-2}/\binom{n}{d}.
$$  
Therefore, 
\begin{align*}
    \E(\|x\|_2^2\, |\, H_1)
    &=\frac{d}{n}\sum_{k=1}^n f_1(k)^2+\frac{d(d-1)}{n(n-1)}\sum_{i\neq j}f_1(i)f_1(j)\\
    &=\frac{d}{n}\sum_{k=1}^n f_1(k)^2+\frac{d(d-1)}{n(n-1)}\lr{\lr{\sum_{k=1}^n f_1(k)}^2-\sum_{k=1}^n f_1(k)^2}\\
    &=\frac{d}{n}\cdot\frac{n-d}{n-1}+\frac{d(d-1)}{n(n-1)}\lr{\sum_{k=1}^n f_1(k)}^2,
\end{align*}
where the last equality relies on the fact that $\|f_1\|_2^2=\sum_{k=1}^n f_1(k)^2=1$.
Thus we get
\begin{equation*}
   \E \|x\|_2^2 = \E(\E(\|x\|_2^2\, |\, H_1))=\frac{d(n-d)}{n(n-1)}+\frac{d(d-1)}{n(n-1)}\, \E\lr{\sum_{k=1}^n f_1(k)}^2.
\end{equation*}
To bound $\E\lr{\sum_{k=1}^n f_1(k)}^2$, note that $f_1$ is orthogonal to $H_1$, so $\langle f_1, X_i\rangle=0$ for every $i=2,\dots, n$. Thus, 
\[
\inn{  f_1, \sum_{i=2}^nX_i}= 0.
\]
Therefore,
\begin{align*}
    \E\lr{\sum_{k=1}^n f_1(k)}^2&=\E\inn{  f_1, \mathbf{1}_n}^2=\E\lr{\inn{  f_1, \mathbf{1}_n-\frac{n}{d}\cdot\frac{1}{n-1}\sum_{i=2}^nX_i}}^2.
\end{align*}
Since $\|f\|_2=1$, 
\begin{align*}
  \E\lr{\inn{  f_1, \mathbf{1}_n-\frac{n}{d}\cdot\frac{1}{n-1}\sum_{i=2}^nX_i}}^2&\le 
   \E\left\|\mathbf{1}_n-\frac{n}{d}\cdot\frac{1}{n-1}\sum_{i=2}^nX_i\right\|_2^2,\\
    &=\sum_{j=1}^n \E\lr{1-\frac{n}{d}\cdot\frac{1}{n-1}\sum_{i=2}^nX_i(j)}^2\\
    &=\sum_{j=1}^n \frac{n^2}{(d(n-1))^2}\E\lr{\frac{d(n-1)}{n}-\sum_{i=2}^nX_i(j)}^2.
\end{align*}
Since each row $R_i$ is chosen independently, the entries $M_{2j}, \dots, M_{nj}$ in $j$-th column are independent Bernoulli random variables with 
the mean $d/n$. Thus, $\sum_{i=2}^nX_i(j)=\sum_{i=2}^nM_{ij}$ is $\mbox{Binomial}(n-1, d/n)$. Therefore,
\begin{equation*}
    \E\lr{\frac{d(n-1)}{n}-\sum_{i=2}^nX_i(j)}^2=\mbox{Var}\lr{\sum_{i=2}^nX_i(j)}=(n-1)\cdot\frac{d}{n}\lr{1-\frac{d}{n}}.
\end{equation*}
Hence,
\begin{align*}
     \E\left\|\mathbf{1}_n-\frac{n}{d}\cdot\frac{1}{n-1}\sum_{i=2}^nX_i\right \|_2^2=\frac{n(n-d)}{d(n-1)}.
\end{align*}
This implies  
\begin{equation*}
    \E(\|x\|_2^2)=\frac{d(n-d)}{n(n-1)}+\frac{(d-1)(n-d)}{(n-1)^2}=\frac{n-d}{n-1}\left(\frac{d}{n}+\frac{d-1}{n-1}\right)\le \frac{d}{n}+\frac{d-1}{n-1}\le \frac{3d}{n} .
\end{equation*}
Therefore, by (\ref{chebmark}),  for every $u>0$, 
\begin{equation}\label{eq: upper_bdd_x_1}
    \P(\|x\|_2\ge u)\le \frac{3\, d}{u^2\, n}.
\end{equation}

\subsection{Bounding $\|(M^T)^{-1}x\|_2$}
Note that
\begin{equation*}
    \|(M^T)^{-1}x\|_2=\|(M^T)^{-1}X_1-(M^T)^{-1}P_1X_1\|_2=\|e_1-(M^T)^{-1}P_1X_1\|_2.
\end{equation*}
Since $P_1X_1\in \mbox{span}\{X_2,\dots, X_n\}$, the vector $(M^T)^{-1}P_1X_1\in \mbox{span}\set{e_2,\dots, e_n}$ is orthogonal to $e_1$. Denote by $X_k^*=M^{-1}e_k$ the $k$-th column of the inverse matrix $M^{-1}$ for $k=1,\dots, n$.  We have $$\|P_1X_1^*\|_2^2=\langle P_1M^{-1}e_1, P_1M^{-1}e_1\rangle=\langle M^{-1}e_1, P_1M^{-1}e_1\rangle=\langle e_1, (M^{-1})^TP_1M^{-1}e_1\rangle=0,$$ where we used the fact that $P_1M^{-1}e_1$ belongs to $\mbox{span}\{X_2,\dots, X_n\}$, hence  $$(M^{-1})^TP_1M^{-1}e_1\in \mbox{span}\{e_2,\dots, e_n\}. $$
Therefore, denoting $Y_k:=P_1X_k^*$,
\begin{align*}
    \|(M^T)^{-1}x\|_2^2&=\|e_1\|_2^2+\|(M^T)^{-1}P_1X_1\|_2^2 
    \ge \|(M^T)^{-1}P_1X_1\|_2^2
    \\&=\sum_{k=1}^n\langle(M^T)^{-1}P_1X_1, e_k\rangle^2
    =\sum_{k=1}^n\langle X_1, P_1X_k^*\rangle^2
    \\&=\sum_{k=2}^n\langle X_1, \underbrace{P_1X_k^*}_{=:Y_k}\rangle^2=\sum_{k=2}^n\langle X_1, Y_k\rangle^2.
\end{align*}

The next lemma follows from \cite[Lemma 2.1]{rudelson2008least} which works for any invertible matrix. By the result in \cite{tran2020smallest}, $M$ is invertible with probability at least $1-2e^{-c_p n}$, therefore, we may assume without loss of generality that $M^{T}$ is invertible in our lemma. In particular, $H_1:=\mbox{span}\set{X_2,\dots, X_n}$ is  
$(n-1)$-dimensional.

\begin{lemma}\label{lem: bio_system_k2}
    Recall that $X_k=R_k^T$ is the $k$-th column vector of $M^{T}$ for $k\in [n]$, and $P_1$ is the orthogonal projection onto $H_1$. If $Y_k=P_1(M^{-1}e_k)$ for $k=2,\dots, n$ is defined as above, then $\{X_k, Y_k\}_{k=2}^n$ is a  biorthogonal system in $H_1$. 
\end{lemma}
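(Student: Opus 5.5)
We must show $\langle X_j, Y_k\rangle=\delta_{jk}$ for $j,k\in\{2,\dots,n\}$, and that all these vectors lie in $H_1$. Membership is immediate: each $X_k$ with $k\ge 2$ lies in $H_1=\mathrm{span}\{X_2,\dots,X_n\}$ by definition, and each $Y_k=P_1(M^{-1}e_k)$ lies in $H_1$ because $P_1$ projects onto $H_1$.

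For the biorthogonality relations we will use the relation between $M$ and its rows. Since the rows of $M$ are $R_i=X_i^T$, we have $\langle X_j, z\rangle=(Mz)_j=\langle e_j, Mz\rangle$ for every $z\in\R^n$. By \cref{prop_biorth}(1), $\{M^Te_k, M^{-1}e_k\}_{k=1}^n$ is a biorthogonal system; note $M^Te_k=X_k$. Concretely, $\langle X_j, M^{-1}e_k\rangle=\langle e_j, MM^{-1}e_k\rangle=\delta_{jk}$ for all $j,k\in[n]$.

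Now fix $j,k\ge 2$. Since $X_j\in H_1$ and $P_1$ is an orthogonal (hence self-adjoint) projection with $P_1X_j=X_j$, we get
\[
\langle X_j, Y_k\rangle=\langle X_j, P_1M^{-1}e_k\rangle=\langle P_1X_j, M^{-1}e_k\rangle=\langle X_j, M^{-1}e_k\rangle=\delta_{jk}.
\]
This gives the biorthogonal system $\{X_k,Y_k\}_{k=2}^n$ in $H_1$. Invertibility of $M$ (assumed, as discussed before the statement) guarantees that $M^{-1}$ exists and that $H_1$ has dimension $n-1$, so the system has the right size to be a biorthogonal system in the $(n-1)$-dimensional space $H_1$. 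By \cref{prop_biorth}(2) the $Y_k$ are then the unique dual vectors to $\{X_k\}_{k\ge2}$ within $H_1$.

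There is no real obstacle here; the only point needing care is the self-adjointness step, namely moving $P_1$ from $Y_k$ onto $X_j$ and using $P_1X_j=X_j$, which is exactly where the restriction $j\ge 2$ enters.
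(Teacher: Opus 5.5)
Your proof is correct and follows essentially the paper's route. The paper gives no argument of its own and simply cites \cite[Lemma 2.1]{rudelson2008least}, whose proof is exactly your self-adjointness computation $\langle X_j, P_1M^{-1}e_k\rangle=\langle P_1X_j, M^{-1}e_k\rangle=\langle X_j, M^{-1}e_k\rangle=\delta_{jk}$ for $j,k\ge 2$, transcribed (as you did) from the columns of a matrix to the rows of $M$, i.e.\ the columns of $M^T$.
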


The following is a consequence of the uniqueness in \cref{prop_biorth}.

\begin{corollary}\label{coro:unique_system}
    The system of vectors $\{Y_k\}_{k=2}^n$  defined as in \cref{lem: bio_system_k2} is uniquely determined by the system $\{X_k\}_{k=2}^n$ within the subspace $H_1$. In particular, the system $\{Y_k\}_{k=2}^n$ and the vector $X_1$ are independent. 
\end{corollary}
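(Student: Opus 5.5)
The corollary has two parts, and I would handle them in order: first the deterministic uniqueness of the dual system, then independence from $X_1$. Throughout I work on the event that $M$ is invertible, as the paper already assumes.

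\textbf{Uniqueness.} By \cref{lem: bio_system_k2}, $\{X_k,Y_k\}_{k=2}^n$ is a biorthogonal system in the $(n-1)$-dimensional Hilbert space $H_1$, and $\{X_k\}_{k=2}^n$ is a linearly independent spanning set of $H_1$. Part 2 of \cref{prop_biorth}, applied with $\mathcal{H}=H_1$, says that any vectors $\{F_k\}_{k=2}^n\subset H_1$ biorthogonal to $\{X_k\}_{k=2}^n$ are unique. Hence $Y_k=F_k$ for every $k$.

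\textbf{Explicit form.} To make the dependence on $X_2,\dots,X_n$ concrete, I would write the dual system explicitly. Let $G=(\langle X_i,X_j\rangle)_{i,j=2}^n$ be the Gram matrix, which is invertible because the $X_k$ are linearly independent. Then
\[
Y_k=\sum_{j=2}^n (G^{-1})_{jk}X_j .
\]
Each $Y_k$ lies in $H_1$ and satisfies $\langle X_i,Y_k\rangle=(GG^{-1})_{ik}=\delta_{ik}$, so by uniqueness this is the dual system. It follows that on the set where $X_2,\dots,X_n$ are linearly independent, there is a deterministic measurable map $\Phi$ with $(Y_2,\dots,Y_n)=\Phi(X_2,\dots,X_n)$. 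The map is measurable because it is a rational function of the entries.

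\textbf{Independence.} The rows $R_1,\dots,R_n$ are independent, so $X_1$ is independent of $(X_2,\dots,X_n)$. A measurable function of $(X_2,\dots,X_n)$ is therefore independent of $X_1$, which gives the independence of $\{Y_k\}_{k=2}^n$ and $X_1$.

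\textbf{Main obstacle.} The only subtle point is that, as originally defined, $Y_k=P_1(M^{-1}e_k)$ involves $M^{-1}$ and hence $X_1$. The uniqueness argument is exactly what removes this apparent dependence. A second technicality is that invertibility of $M$ requires $X_1\notin H_1$, which depends on $X_1$. I would handle this by defining $Y_k$ through $\Phi$ whenever $X_2,\dots,X_n$ are linearly independent, a condition that depends only on $X_2,\dots,X_n$. On the invertibility event this definition coincides with $P_1M^{-1}e_k$. The independence statement then holds for the $\Phi$-defined vectors, and the exceptional event has probability at most $2e^{-c_pn}$.
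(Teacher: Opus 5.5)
Your proposal is correct and takes the same approach as the paper, which proves the corollary in one line by invoking the uniqueness in part 2 of \cref{prop_biorth} inside $H_1$; that is exactly your first step. Your Gram-matrix formula $Y_k=\sum_{j=2}^n (G^{-1})_{jk}X_j$ and your redefinition of $Y_k$ on the event that $X_2,\dots,X_n$ are linearly independent are extra rigor the paper leaves implicit: that event does not involve $X_1$, unlike invertibility of $M$, so this settles both measurability and the subtle fact that invertibility of $M$ depends on $X_1$.
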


Denote $H_{i,j}=\mbox{span}\{X_k: k\notin\{i, j\}\}$. By \eqref{eq:bioth}, we have $\|Y_k\|_2=1/\mbox{dist}(X_k, H_{1,k})$. Therefore, 
\begin{align*}
     \|(M^T)^{-1}x\|_2^2\geq \sum_{k=2}^n \langle X_1, Y_k\rangle^2=\sum_{k=2}^n \inn{X_1, \frac{Y_k}{\|Y_k\|_2}}^2
     \, \|Y_k\|_2^2 =\sum_{k=2}^n\frac{a^2_k}{b^2_k},
\end{align*}
where we denoted 
$$a_k=\la{\langle  X_1, Y_k\rangle}/\|Y_k\|_2\quad \mbox{ and } \quad b_k=1/\|Y_k\|_2=\mbox{dist}(X_k, H_{1,k})\quad \mbox{ for }\, \, k\in \{2,\dots, n\}.$$

\subsection{Probabilistic bounds for $a_k$ and $b_k$}
 Next, we show that for each $k$, with high probability $a_k$ is bounded from below and $b_k$ is bounded from above. Without loss of generality, we will do this for $k=2$. The argument for any $k\in \{2,\dots, n\}$ is the same as $k=2$. 

We split the unit sphere into sets of almost constant vectors and non-almost constant vectors which were introduced in \cref{def:almost_constant}. First, we will show that $H_{1,2}^\perp$ consists of non-almost constant vectors with high probability. Consider an $(n-2)\times n$ matrix $B$ with rows $X_3,\dots, X_n$. Since the subspace $H_{1,2}$ is the span of random vectors $X_3,\dots, X_n$, 
we observe $H_{1,2}^\perp \subset \ker(B)$. 

By  \cref{prop: inver_cons_pcons} applied to $(n-2)\times n$ matrix $B$, there exist positive constants $C_{\ref{prop: inver_cons_pcons}}, c_{\ref{prop: inver_cons_pcons}}, \delta, \rho\in (0, 1)$ depending only on $p$, such that
\begin{equation}\label{rvscheme}
\P\lr{H_{1,2}^\perp\cap  \mbox{Cons}_{\delta, \rho}=\emptyset }\ge \P\lr{\inf_{v\in \mbox{Cons}_{\delta, \rho} }\|Bv\|_2\ge c_{\ref{prop: inver_cons_pcons}}\sqrt{pn}
    }\ge 1-e^{-C_{\ref{prop: inver_cons_pcons}}n},
\end{equation}
which exactly means that the subspace $H_{1,2}^\perp$ consists of not almost constant vectors with high probability. 

Fix an absolute constant $\mu>0$ and let $\gamma \in (0, \delta \rho/12)$. Denote 
\[
  \cE:=\set{v\in H_{1,2}^\perp\cap \S^{n-1}:\,\, \mbox{CLCD}_{\mu n, \gamma}(v)\ge \sqrt{\delta n}/7}
\]
By \cref{lem: clcd_sq_bdd} and (\ref{rvscheme}), 
\begin{equation}\label{eq: thm1_notalmostconstant}
    \P\lr{\cE}\ge  1-e^{-C_{\ref{prop: inver_cons_pcons}}n}.
\end{equation}

Conditioning on the subspace $H_{1}$, we fix a realization of vectors $\{X_i\}_{i=2}^n$. Then by the uniqueness in \cref{coro:unique_system} the vector $Y_2$ is also fixed. Denote $Y:=Y_2/\|Y_2\|_2$. By \cref{lem: bio_system_k2},  $\{X_i\}_{i=2}^n$ and  $\{Y_i\}_{i=2}^n$ form a biorthogonal system. In particular, $Y_2$ is orthogonal to  $\{X_i\}_{i=3}^n$. 
Thus, $Y\in H_{1,2}^\perp\cap\S^{n-1}$. Conditioning on the event $\cE$, we know that 
\[
    \mbox{CLCD}_{\mu n,\gamma }(Y)\ge \sqrt{\delta n}/7.
\]
By \cref{lem:small_ball} and parameters chosen as in \cref{rem:clcd_pcont}, 
for every $\varepsilon\ge 0$, there exist  constants $C_1, C_2>0$ depending on $\rho, \delta, \gamma$ such that
\[
\P\lr{a_2\le \varepsilon\sqrt{p}\, |\, X_2,\dots, X_n}\le C_1\varepsilon+\frac{C_2}{\sqrt{d}}+2e^{-4\mu^2pn}.
\]
Now we unfix all random vectors $X_2, \dots, X_n$ and then by \eqref{eq: thm1_notalmostconstant} we get for every $\varepsilon\ge 0$
\begin{align*}
    \P(a_2\le \varepsilon\sqrt{p})&=\P(\{a_2\le \varepsilon\sqrt{p}\}\cap\cE)+\P(\cE^c)\\&\le \E_{X_2,\dots, X_n}\P(\{a_2\le \varepsilon\sqrt{p}\, |\, X_2,\dots, X_n\}\cap\cE)+\P(\cE^c)\\&\le  C_1\varepsilon+\frac{C_2}{\sqrt{d}}+2e^{-4\mu^2pn}+e^{-C_{\ref{prop: inver_cons_pcons}}n}.
\end{align*}

To show a lower bound on $b_2$, we use a similar argument as in the estimate \eqref{eq: upper_bdd_x_1} for $\|x\|_2$. 
We condition on $H_{1,2}$, use Chebyshev's inequality and then take the expectation to obtain for every $t>0$, 

\[
\P\lr{b_2\ge t}=\P(\mbox{dist}(X_2, H_{1,2})\ge t)\le \frac{3\, d}{t^2\, n}.
\]
Hence,
\[
  \forall t>0 \qquad \P\lr{b_2\ge t\sqrt{3p}}\le \frac{1}{t^2}.
\]

Combining the probability estimates for $a_2$ and $b_2$, we obtain that  for every $t>0$ and every $\varepsilon\ge 0$ one has
\[
\P\lr{a_2\le \varepsilon\sqrt{p}\quad  \mbox{ or } \quad b_2\ge t\sqrt{3p}}\le  C_1\varepsilon+\frac{C_2}{\sqrt{d}}+2e^{-4\mu^2pn}+e^{-C_{\ref{prop: inver_cons_pcons}}n}+\frac{1}{t^2},
\]
where $C_1, C_2>0$ are  constants depending only on $p$. 

Repeating the above argument for all $k\in \{2,\dots, n\}$ instead of $k=2$ we conclude that for any $t>0$, $\varepsilon\ge 0$, and for large enough $d$ and a large enough constant $C_3>0$ (which may depend on $p$),
\begin{equation}\label{akbk}
\P\lr{\frac{a_k}{b_k} \le \frac{\varepsilon}{\sqrt{3}\, t}}\le C_1\varepsilon+\frac{C_3}{\sqrt{d}}+\frac{1}{t^2}. 
\end{equation}

\smallskip 

To complete the proof we  need the following general statement.

\begin{prop}\cite[Proposition 2.2]{rudelson2008least}\label{prop:Markov}
Let $Z_k$ be non-negative random variables for $k\in [n]$. Then for every $\varepsilon>0$, 
\begin{equation*}
    \P\lr{\frac{1}{n}\sum_{k=1}^nZ_k\le \varepsilon}\le \frac{2}{n}\sum_{k=1}^n\P(Z_k\le 2\varepsilon).
\end{equation*}
\end{prop}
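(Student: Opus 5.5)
The statement to prove is Proposition~\ref{prop:Markov}: for non-negative $Z_1,\dots,Z_n$ and $\varepsilon>0$,
\[
\P\Bigl(\tfrac1n\textstyle\sum_k Z_k\le\varepsilon\Bigr)\le \tfrac2n\textstyle\sum_k\P(Z_k\le2\varepsilon).
\]
The plan is a counting argument combined with Markov's inequality, applied to the number of indices at which $Z_k$ is small. Define the random variable
\[
N:=\#\{k\in[n]:\ Z_k\le 2\varepsilon\}=\sum_{k=1}^n \1_{\{Z_k\le 2\varepsilon\}},
\]
so that, by linearity of expectation, $\E N=\sum_{k=1}^n\P(Z_k\le 2\varepsilon)$. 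The goal then reduces to a deterministic implication: on the event $\{\frac1n\sum_k Z_k\le\varepsilon\}$ we have $N\ge n/2$.

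To verify the implication, I will use non-negativity. Each index with $Z_k>2\varepsilon$ contributes more than $2\varepsilon$ to the sum, and all other terms are at least $0$. Hence
\[
n\varepsilon\ \ge\ \sum_{k=1}^n Z_k\ \ge\ 2\varepsilon\,(n-N).
\]
If $N<n/2$, the right-hand side would exceed $n\varepsilon$, a contradiction. So $n-N\le n/2$, that is, $N\ge n/2$. Strictly, if $n-N>0$ we get the strict inequality $\sum_k Z_k>2\varepsilon(n-N)$; either way the conclusion $N\ge n/2$ holds, and since $\varepsilon>0$ we may divide by it.

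Markov's inequality applied to the non-negative integer variable $N$ then gives
\[
\P\Bigl(\tfrac1n\textstyle\sum_k Z_k\le\varepsilon\Bigr)\le \P(N\ge n/2)\le \frac{\E N}{n/2}=\frac2n\sum_{k=1}^n\P(Z_k\le 2\varepsilon),
\]
which is the claim. No independence among the $Z_k$ is needed, and this matters for the later application, where $Z_k=a_k^2/b_k^2$ are dependent. There is no real obstacle in this argument. The only care required is to use non-negativity when discarding the small terms and to handle the strict-versus-non-strict inequality correctly.
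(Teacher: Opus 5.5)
Your proof is correct and is essentially the standard argument behind this result; the paper does not reprove it but only cites \cite[Proposition~2.2]{rudelson2008least}. On the event $\{\frac{1}{n}\sum_k Z_k\le\varepsilon\}$, non-negativity forces at least $n/2$ indices with $Z_k\le 2\varepsilon$. Markov's inequality applied to the count $N=\sum_k \mathbf{1}_{\{Z_k\le 2\varepsilon\}}$ then gives the bound, and no independence among the $Z_k$ is needed.
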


We  use \cref{prop:Markov} for $Z_k=(a_k/b_k)^2$ to get
\[
\P\lr{ \|(M^T)^{-1}x\|_2\le \frac{\varepsilon \sqrt{n}}{\sqrt{3}\, t}}\le \P\lr{ \frac{1}{n}\sum_{k=2}^nZ_k\le \frac{\varepsilon^2}{3t^2}}\le \frac{2}{n}\sum_{k=2}^n\P\lr{Z_k\le \frac{2\varepsilon^2}{3t^2}}.
\]
By (\ref{akbk}) this implies 
\[
\P\lr{ \|(M^T)^{-1}x\|_2 \le \frac{\varepsilon \sqrt n}{\sqrt{3}\, t}} \le  2\sqrt{2}C_1\varepsilon+\frac{2C_3}{\sqrt{d}}+\frac{2}{t^2}.
\]
Combining  the above estimate with \eqref{eq: upper_bdd_x_1}, we obtain that
for every $\varepsilon> 0, u>0, t>0$,
\begin{align*}
    \P\lr{s_n(M)\le \frac{ut}{\varepsilon}\sqrt{\frac{p}{n}}}&\ge 
    \P\lr{\|x\|_2\le u\sqrt{p}/\sqrt{3}\quad \mbox{ and } \quad \|(M^T)^{-1}x\|_2\ge 
    \frac{\varepsilon\sqrt{n}}{\sqrt{3}\, t}}\\
    &\ge 1-C_4\lr{\varepsilon+\frac{1}{\sqrt{d}}+\frac{1}{t^2}+\frac{1}{u^2}},
\end{align*}
where $C_4>0$ is a constant depending only on $p$. 
To complete the proof, we fix $\varepsilon>0$, choose $u=t=1/\sqrt{\varepsilon}$, and adjust the constants.



\bibliographystyle{abbrv}
\bibliography{Singular_value}




\vspace{1cm}

\par\noindent
\textsc{Alexander E. Litvak} 
\\
\textit{Email address:} \texttt{alitvak@ualberta.ca}

\vspace{0.25cm}

\par\noindent
\textsc{Dongbin Li} 
\\
\textit{Email address:} \texttt{dongbin@ualberta.ca}

\vspace{0.25cm}

\par\noindent
\textsc{Tingzhou Yu} 
\\
\textit{Email address:} \texttt{tingzho1@ualberta.ca}

\vspace{0.5cm} 
\par\noindent
Department of Mathematical and Statistical Sciences, University of Alberta, Edmonton, AB, T6G 2R3, Canada.

\end{document}